\address{Department of Algebra, Faculty of Mathematics and Physics, Charles University in Prague, Sokolovsk\'a 83, 186 75 Praha, Czech Republic}
\email{shaul@karlin.mff.cuni.cz}
\newtheorem{thm}[equation]{Theorem}
\newtheorem*{thm*}{Theorem}
\newtheorem*{cor*}{Corollary}
\newtheorem*{dfn*}{Definition}
\newtheorem{cthm}{Theorem}
\newtheorem{ccor}[cthm]{Corollary}
\newtheorem{cor}[equation]{Corollary}
\newtheorem{prop}[equation]{Proposition}
\newtheorem{lem}[equation]{Lemma}
\theoremstyle{definition}
\newtheorem{rem}[equation]{Remark}
\newcommand{\inj}{\hookrightarrow}
\newcommand{\opn}{\operatorname}
\newcommand{\cat}[1]{\operatorname{\mathsf{#1}}}
\newcommand{\mfrak}[1]{\mathfrak{#1}}
\newcommand{\mrm}[1]{\mathrm{#1}}
\newcommand{\mbb}[1]{\mathbb{#1}}
\newcommand{\K}{\mbb{K} \hspace{0.05em}}
\newcommand{\p}{\mfrak{p}}
\newcommand{\injdim}{\operatorname{inj\,dim}}
\newcommand{\projdim}{\operatorname{proj\,dim}}
\def\skewtimes{\ltimes\!}
\title{Categorical properties of reduction functors over non-positive DG-rings}
\author{Liran Shaul}
\begin{document}

\begin{abstract}
Given a non-positive DG-ring $A$,
associated to it are the reduction and coreduction functors $F(-) = \mrm{H}^0(A)\otimes^{\mrm{L}}_A -$ and $G(-) = \mrm{R}\opn{Hom}_A(\mrm{H}^0(A),-)$, 
considered as functors $\cat{D}(A) \to \cat{D}(\mrm{H}^0(A))$,
as well as the forgetful functor $S:\cat{D}(\mrm{H}^0(A)) \to \cat{D}(A)$.
In this paper we carry a systematic study of the categorical properties of these functors.
As an application, a new descent result for vanishing of $\opn{Ext}$ and $\opn{Tor}$ over ordinary commutative noetherian rings is deduced.
\end{abstract}

\setcounter{section}{-1}

\numberwithin{equation}{section}
\maketitle

\section{Introduction}

Let $A$ be a non-positively graded differential graded ring,
where we grade cohomologically. 
In this situation, 
there is a natural map of DG-rings $\pi_A:A\to \mrm{H}^0(A)$,
which give rise to three important functors associated to $A$.
They are the two functors $\cat{D}(A) \to \cat{D}(\mrm{H}^0(A))$ defined as
\[
F(-):=\mrm{H}^0(A)\otimes^{\mrm{L}}_A -,
\quad 
G(-):=\mrm{R}\opn{Hom}_A(\mrm{H}^0(A),-),
\]
as well as the forgetful functor $S:\cat{D}(\mrm{H}^0(A)) \to \cat{D}(A)$.
The functors $F,G$ are called the reduction and coreduction functors respectively,
and are of crucial importance in the study of non-positive DG-rings.
We first learned about these functors from \cite{YeDual}.

The aim of this paper is to perform a systematic study of the categorical properties of these three foundational functors.
Our first result concerns fullness and faithfulness of these functors.
\begin{cthm}
Let $A$ be a non-positive DG-ring.
Then the following holds:
\begin{enumerate}
\item The reduction functor $\mrm{H}^0(A)\otimes^{\mrm{L}}_A -:\cat{D}(A) \to \cat{D}(\mrm{H}^0(A))$ is full if and only if it is faithful if and only if $A \cong \mrm{H}^0(A)$,
so that $A$ is equivalent to the underlying ring $\mrm{H}^0(A)$.
\item The coreduction functor $\mrm{R}\opn{Hom}_A(\mrm{H}^0(A),-):\cat{D}(A) \to \cat{D}(\mrm{H}^0(A))$ is full if and only if it is faithful if and only if $A \cong \mrm{H}^0(A)$,
so that $A$ is equivalent to the underlying ring $\mrm{H}^0(A)$.
\item The forgetful functor $\cat{D}(\mrm{H}^0(A)) \to \cat{D}(A)$ is full if and only if $A \cong \mrm{H}^0(A)$,
so that $A$ is equivalent to the underlying ring $\mrm{H}^0(A)$.
\item There exist examples of DG-rings $A$ which are not equivalent to the underlying ring $\mrm{H}^0(A)$,
such that the forgetful functor $\cat{D}(\mrm{H}^0(A)) \to \cat{D}(A)$ is faithful,
and other examples where it is not faithful. In both cases, examples exist even when $A$ is a commutative noetherian DG-ring with bounded cohomology.
\end{enumerate}
\end{cthm}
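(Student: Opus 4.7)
The plan follows a uniform strategy. The easy direction ``$A \simeq \mrm{H}^0(A) \Rightarrow$ the functor is fully faithful'' is immediate in parts (1)--(3), since $\pi_A$ is then invertible in the derived category and all three functors become equivalences. The converses use the adjunctions $F \dashv S \dashv G$ between $\cat{D}(A)$ and $\cat{D}(\mrm{H}^0(A))$ together with the Yoneda lemma to convert (co)fullness/(co)faithfulness of each functor into a splitting property of the unit $\eta: \mathrm{id}_{\cat{D}(A)} \to SF$ or the counit $\epsilon: SG \to \mathrm{id}_{\cat{D}(A)}$.

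For part (1): if $\mrm{H}^{-n}(A) \neq 0$ for some $n > 0$ then any non-zero element of $\Hom_{\cat{D}(A)}(A, A[-n]) = \mrm{H}^{-n}(A)$ is sent by $F$ into $\Hom_{\cat{D}(\mrm{H}^0(A))}(\mrm{H}^0(A), \mrm{H}^0(A)[-n]) = 0$, so $F$ is not faithful. For fullness, Yoneda applied to $F \dashv S$ says ``$F$ is full'' is equivalent to ``$\eta_Y : Y \to SFY$ is a split epimorphism in $\cat{D}(A)$ for every $Y$''; taking $Y = A$ the unit is just $\pi_A$, so $A \simeq \mrm{H}^0(A) \oplus N$ in $\cat{D}(A)$ with $N = \mathrm{fib}(\pi_A)$ concentrated in negative cohomological degrees, and then $\Hom_{\cat{D}(A)}(A, N) = \mrm{H}^0(N) = 0$ forces the projection $A \to N$ to vanish, so $N = 0$.

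For parts (2) and (3), the common key output is the vanishing $\opn{Ext}^n_A(\mrm{H}^0(A), M) = 0$ for $n \neq 0$ (respectively $n > 0$) and every $\mrm{H}^0(A)$-module $M$. For the coreduction $G$, Yoneda applied to $S \dashv G$ shows that fullness of $G$ forces $\epsilon_X$ to be a split monomorphism, and faithfulness of $G$ forces $\epsilon_X$ to be right-cancellative (equivalently, the cofibre triangle of $\epsilon_X$ splits); specializing to $X = M$ an $\mrm{H}^0(A)$-module and computing cohomology (using the identity $\opn{Ext}^0_A(\mrm{H}^0(A), M) = M$) pins $\epsilon_M$ down as an isomorphism, which is exactly the vanishing. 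For the forgetful $S$, fullness yields a surjection $\opn{Ext}^n_{\mrm{H}^0(A)}(\mrm{H}^0(A), M) \twoheadrightarrow \opn{Ext}^n_A(\mrm{H}^0(A), M)$, and the source is $0$ for $n > 0$. The main technical step is then the bootstrap from this vanishing to $A \simeq \mrm{H}^0(A)$: apply $\RHom_A(-, M)$ to the fibre sequence $N \to A \to \mrm{H}^0(A)$ to get, from the long exact sequence, $\RHom_A(N, M) = 0$ for every $\mrm{H}^0(A)$-module $M$; then induct on $k \geq 1$, observing that once $\mrm{H}^{-i}(A) = 0$ for $1 \leq i < k$ the canonical truncation $N \to \mrm{H}^{-k}(A)[k]$ is a class in $\opn{Ext}^k_A(N, \mrm{H}^{-k}(A)) = 0$ that is the identity on $\mrm{H}^{-k}$, so $\mrm{H}^{-k}(A) = 0$. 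This bootstrap is where I expect the main care.

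For part (4), the faithful example is $A = \k[\epsilon]/(\epsilon^2)$ with $|\epsilon| = -1$ over a field $\k$: since $\cat{D}(\k)$ is semisimple and the unit $\eta_N : N \to \RHom_A(\k, N)$ is injective on every cohomology group, it is automatically a split monomorphism in $\cat{D}(\k)$, so $S$ is faithful while $A \not\simeq \k = \mrm{H}^0(A)$. For the non-faithful example one needs $\RHom_A(\mrm{H}^0(A), \mrm{H}^0(A))$ to carry nontrivial cohomological extensions in $\cat{D}(\mrm{H}^0(A))$ that obstruct the splitting of $\eta$, which can be arranged by a Koszul-type DG-ring construction over an Artinian (non-semisimple) commutative base.
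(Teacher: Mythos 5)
Your overall framework (the adjoint triple $F\dashv S\dashv G$ plus the split-mono/epi characterizations of full and faithful) is the same as the paper's, and several pieces are correct and in places cleaner: the direct observation that $F$ kills the nonzero group $\Hom_{\cat{D}(A)}(A,A[-n])=\mrm{H}^{-n}(A)$, the treatment of fullness of $F$ and of $S$ via the unit/counit at the free object, and the bootstrap from ``$\opn{Ext}^n_A(\mrm{H}^0(A),M)=0$ for all $n\neq 0$ and all $\mrm{H}^0(A)$-modules $M$'' to $A\cong\mrm{H}^0(A)$ is a nice self-contained substitute for the paper's appeals to results of Yekutieli and Minamoto. However, there are two genuine gaps.

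First, the faithfulness direction of part (2) does not go through as described. Faithfulness of the right adjoint $G$ only gives that each counit $\epsilon_X\colon SG(X)\to X$ is an epimorphism, hence that $SG(X)\cong X\oplus Z$ with $Z$ a shift of the cone. For $X=M$ an $\mrm{H}^0(A)$-module, $SG(M)=\RHom_A(\mrm{H}^0(A),M)$ has cohomology $M$ in degree $0$ and $\opn{Ext}^i_A(\mrm{H}^0(A),M)$ in degrees $i>0$, so a decomposition $SG(M)\cong M\oplus Z$ with $Z$ concentrated in positive degrees is perfectly consistent on cohomology: nothing ``pins $\epsilon_M$ down as an isomorphism,'' and you do not obtain the vanishing of $\opn{Ext}^{>0}_A(\mrm{H}^0(A),M)$. (Contrast with the fullness direction, where the split \emph{mono} condition forces $SG(M)$ to be a summand of $M$ and the vanishing does follow.) Shifting $M$ does not help. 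The paper's proof tests the counit on a derived injective DG-module $E$ with $\mrm{H}^0(E)=\bar E$ an injective module containing $\mrm{H}^n(A)$: the structure theory of derived injectives gives $G(E)\cong\mrm{H}^0(E)$, concentrated in degree $0$, while $\mrm{H}^{-n}(E)\cong\Hom_{\mrm{H}^0(A)}(\mrm{H}^n(A),\bar E)\neq 0$ lives in a negative degree, so $SG(E)$ cannot have $E$ as a direct summand. Some input of this kind, producing a test object whose coreduction collapses while the object itself has forced negative cohomology, is what your argument is missing.

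Second, the non-faithful half of part (4) is not actually established: you only assert that a suitable Koszul-type example ``can be arranged.'' This is the hardest part of the theorem, and it requires an explicit example together with an explicit Hom-group where faithfulness fails. The paper takes $B=K(A;x^2)$ over $A=\Bbbk[x,y]/(xy)$ (note: not an Artinian base) and, for $M=A/(y)$, computes $\dim_\Bbbk\opn{Ext}^5_B\bigl(S(M/x^2M),S(M/x^2M)\bigr)=1$ while $\dim_\Bbbk\opn{Ext}^5_{\mrm{H}^0(B)}(M/x^2M,M/x^2M)=2$, the guiding idea being to make $B$ Cohen--Macaulay while $\mrm{H}^0(B)$ is not. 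Without such a computation this part remains unproved. (Your faithful example is fine: $\Bbbk[\epsilon]$ with $\deg\epsilon=-1$ is exactly the paper's trivial extension $\Bbbk\ltimes\Bbbk[1]$, although the paper's argument via the ring section $\Bbbk\to B$ of $\pi_B$ is quicker and does not need semisimplicity of $\cat{D}(\Bbbk)$.)
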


The unfortunate fact that the forgetful functor $\cat{D}(\mrm{H}^0(A)) \to \cat{D}(A)$ may fail to be faithful complicates the study of DG-rings.
On the positive side, we prove in \cref{thm:sym-vanish} that while the forgetful functor is not necessarily faithful, it does able to detect vanishing of $\mrm{R}\opn{Hom}$ and derived tensor products, under commutativity and boundedness assumptions.
As a corollary of this, we obtain in \cref{cor:torExt} the following descent result for vanishing of $\opn{Ext}$ and $\opn{Tor}$:
\begin{ccor}
Let $A$ be a commutative noetherian ring,
let $\mathbf{x}=x_1,\dots,x_n$ be a finite sequence of elements in $A$, and let $M,N$ be $A$-modules.
Assume that $\mathbf{x}$ is both an $M$-regular and an $N$-regular sequence.
\begin{enumerate}
\item If $\opn{Tor}_n^A(M,N) = 0$ for all $n\ge 0$,
then $\opn{Tor}_n^{A/\mathbf{x}A}(M/\mathbf{x}M,N/\mathbf{x}N) = 0$ for all $n \ge 0$.
\item If $\opn{Ext}^n_A(M,N) = 0$ for all $n\ge 0$,
then $\opn{Ext}^n_{A/\mathbf{x}A}(M/\mathbf{x}M,N/\mathbf{x}N) = 0$ for all $n \ge 0$.
\end{enumerate}
\end{ccor}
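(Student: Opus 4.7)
The plan is to lift the problem to the Koszul DG-ring $K := K(A;\mathbf{x})$ and then invoke the detection-of-vanishing statement \cref{thm:sym-vanish} for the forgetful functor $S:\cat{D}(A/\mathbf{x}A)\to\cat{D}(K)$. The key facts about $K$ are that it is a non-positive commutative noetherian DG-ring with bounded cohomology and $\mrm{H}^0(K)=A/\mathbf{x}A$, and that, viewed as an $A$-complex, it is a bounded complex of finite free $A$-modules.

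First I would observe that since $\mathbf{x}$ is $M$-regular (respectively $N$-regular), the Koszul complex $K\otimes_A M$ (respectively $K\otimes_A N$) has cohomology concentrated in degree zero, so there are quasi-isomorphisms $K\otimes^{\mrm{L}}_A M \simeq M/\mathbf{x}M$ and $K\otimes^{\mrm{L}}_A N \simeq N/\mathbf{x}N$ in $\cat{D}(K)$. Thus in $\cat{D}(K)$ one may identify $S(M/\mathbf{x}M)$ with $K\otimes^{\mrm{L}}_A M$, and likewise for $N$.

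For (1), the hypothesis $\opn{Tor}^A_n(M,N)=0$ for all $n\ge 0$ is equivalent to $M\otimes^{\mrm{L}}_A N = 0$ in $\cat{D}(A)$. The base-change identity
\[
(K\otimes^{\mrm{L}}_A M)\otimes^{\mrm{L}}_K (K\otimes^{\mrm{L}}_A N) \;\simeq\; K\otimes^{\mrm{L}}_A (M\otimes^{\mrm{L}}_A N)
\]
then shows $S(M/\mathbf{x}M)\otimes^{\mrm{L}}_K S(N/\mathbf{x}N) = 0$ in $\cat{D}(K)$. The $\otimes^{\mrm{L}}$-detection clause of \cref{thm:sym-vanish} forces $(M/\mathbf{x}M)\otimes^{\mrm{L}}_{A/\mathbf{x}A}(N/\mathbf{x}N) = 0$, which is exactly the vanishing of all $\opn{Tor}^{A/\mathbf{x}A}_n(M/\mathbf{x}M,N/\mathbf{x}N)$. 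For (2), the extension-of-scalars/restriction adjunction along $A\to K$ gives
\[
\mrm{R}\mrm{Hom}_K(K\otimes^{\mrm{L}}_A M,\,K\otimes^{\mrm{L}}_A N)\;\simeq\;\mrm{R}\mrm{Hom}_A(M,\,K\otimes^{\mrm{L}}_A N),
\]
and since $K$ is a bounded complex of finite free $A$-modules, $\mrm{R}\mrm{Hom}_A(M,-)$ commutes with tensoring by $K$, producing $K\otimes^{\mrm{L}}_A \mrm{R}\mrm{Hom}_A(M,N)$. This vanishes under the hypothesis $\opn{Ext}^*_A(M,N)=0$, so the $\mrm{R}\mrm{Hom}$-detection clause of \cref{thm:sym-vanish} yields $\mrm{R}\mrm{Hom}_{A/\mathbf{x}A}(M/\mathbf{x}M,N/\mathbf{x}N)=0$, as required.

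The main non-formal input is \cref{thm:sym-vanish} itself: Theorem A(4) warns that $S$ may fail to be faithful even in the commutative noetherian bounded setting, yet \cref{thm:sym-vanish} asserts that $S$ is nevertheless conservative on vanishing of derived tensor products and of $\mrm{R}\mrm{Hom}$. This is the only step that leaves the realm of formal derived-category manipulations, and it is what makes the Koszul reduction close; everything else is a standard projection formula and adjunction combined with the regularity of $\mathbf{x}$ on $M$ and $N$.
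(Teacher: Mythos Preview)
Your proof is correct and follows essentially the same route as the paper: pass to the Koszul DG-ring $K(A;\mathbf{x})$, use regularity of $\mathbf{x}$ on $M$ and $N$ to identify $K\otimes^{\mrm{L}}_A M$ and $K\otimes^{\mrm{L}}_A N$ with $M/\mathbf{x}M$ and $N/\mathbf{x}N$, reduce vanishing over $K$ to vanishing over $A$ via the projection formula (for $\otimes^{\mrm{L}}$) and adjunction plus compactness of $K$ over $A$ (for $\mrm{R}\opn{Hom}$), and then invoke \cref{thm:sym-vanish}. The only cosmetic difference is that the paper phrases the $\mrm{R}\opn{Hom}$ step directly as a derived tensor-evaluation isomorphism coming from compactness of $K$, whereas you split it into adjunction followed by commuting $\mrm{R}\opn{Hom}_A(M,-)$ past the bounded finite free complex $K$; these are the same computation.
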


It is particularly interesting to note that in the above result, we do not assume that $\mathbf{x}$ is $A$-regular.
Thus, the passage from $A$ to $A/\mathbf{x}A$ behaves bad from a homological point of view, 
so to prove this completely elementary result, we have to rely on differential graded methods.

Recall, that a functor $T:\mathcal{C}\to \mathcal{D}$ is called conservative if for any morphism $f$ in $\mathcal{C}$,
if $T(f)$ is an isomorphism, 
then $f$ is an isomorphism.
If $\mathcal{C}$ and $\mathcal{D}$ happen to be triangulated categories,
and $T$ is a triangulated functor,
by considering the cone of $f$, 
this is clearly equivalent to the fact that for any object $M$ in $\mathcal{C}$,
if $T(M) \cong 0$ then $M \cong 0$.
Our next main result,
discusses the conservative property for the reduction, coreduction and the forgetful functor.

\begin{cthm}
Let $A$ be a non-positive DG-ring.
\begin{enumerate}
\item The forgetful functor $\cat{D}(\mrm{H}^0(A)) \to \cat{D}(A)$ is always conservative.
\item The restriction of the reduction functor $\mrm{H}^0(A)\otimes^{\mrm{L}}_A -:\cat{D}^{-}(A) \to \cat{D}^{-}(\mrm{H}^0(A))$ to the bounded above derived categories is always conservative.
\item The restriction of the coreduction functor
$\mrm{R}\opn{Hom}_A(\mrm{H}^0(A),-):\cat{D}^{+}(A) \to \cat{D}^{+}(\mrm{H}^0(A))$ to the bounded below derived categories is always conservative.
\item If $A$ has bounded cohomology,
then the reduction functor $\mrm{H}^0(A)\otimes^{\mrm{L}}_A -:\cat{D}(A) \to \cat{D}(\mrm{H}^0(A))$ and the coreduction functor $\mrm{R}\opn{Hom}_A(\mrm{H}^0(A),-):\cat{D}(A) \to \cat{D}(\mrm{H}^0(A))$ are both conservative on the entire unbounded derived categories.
\item There exist a commutative DG-ring $A$ with unbounded cohomology such that the reduction functor $\mrm{H}^0(A)\otimes^{\mrm{L}}_A -:\cat{D}(A) \to \cat{D}(\mrm{H}^0(A))$ and the coreduction functor $\mrm{R}\opn{Hom}_A(\mrm{H}^0(A),-):\cat{D}(A) \to \cat{D}(\mrm{H}^0(A))$ are not conservative.
\end{enumerate}
\end{cthm}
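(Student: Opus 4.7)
The plan is to handle parts (1)--(3) by elementary cohomological arguments, prove (4) via a localizing-subcategory argument exploiting bounded cohomology, and settle (5) with an explicit construction. Part (1) is essentially tautological: the underlying complex of $S(N)$ is literally $N$ with $A$ acting through $\pi_A$, so $\mrm{H}^*(S(N)) = \mrm{H}^*(N)$ and the two vanish together. For (2), given nonzero $M\in\cat{D}^{-}(A)$ with top cohomology in degree $n$, I would apply $F$ to the smart-truncation triangle $\tau^{<n}M \to M \to \mrm{H}^n(M)[-n]\to$. The left term $F(\tau^{<n}M)$ remains in $\cat{D}^{\le n-1}(\mrm{H}^0(A))$ because $\mrm{H}^0(A)$ admits a semi-free $A$-resolution concentrated in non-positive degrees, while $\mrm{H}^n\bigl(F(\mrm{H}^n(M)[-n])\bigr) = \mrm{H}^0(A)\otimes_{\mrm{H}^0(A)}\mrm{H}^n(M) = \mrm{H}^n(M)\ne 0$; the long exact sequence then yields $\mrm{H}^n(F(M))\cong \mrm{H}^n(M)\ne 0$. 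Part (3) is dual: take the bottom cohomology $n$ of $M\in\cat{D}^{+}(A)$, apply $G$ to $\mrm{H}^n(M)[-n]\to M\to \tau^{>n}M\to$, use a K-injective resolution of $\tau^{>n}M$ in degrees $>n$ together with the same semi-free resolution of $\mrm{H}^0(A)$ to see $G(\tau^{>n}M)\in\cat{D}^{>n}(\mrm{H}^0(A))$, and identify $\mrm{H}^n\bigl(G(\mrm{H}^n(M)[-n])\bigr) = \opn{Hom}_A(\mrm{H}^0(A),\mrm{H}^n(M)) = \mrm{H}^n(M)$.

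For (4), the crux is that bounded cohomology forces $\mrm{H}^0(A)$ to generate $\cat{D}(A)$ as a localizing subcategory. If $A$ has amplitude $d$, the finite chain of truncation triangles $\tau^{\le -i-1}A \to \tau^{\le -i}A \to \mrm{H}^{-i}(A)[i]\to$ for $i=0,\dots,d$ assembles $A$ from the modules $\mrm{H}^{-i}(A)$, each of which is an $\mrm{H}^0(A)$-module in the essential image of $S$; since $S$ preserves coproducts and $\mrm{H}^0(A)$ generates $\cat{D}(\mrm{H}^0(A))$ localizingly, this gives $A\in\opn{Loc}_{\cat{D}(A)}(\mrm{H}^0(A))$ and hence $\opn{Loc}(\mrm{H}^0(A))=\cat{D}(A)$, with the symmetric statement for $\cat{D}(A^{\opn{op}})$. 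Given $M$ with $F(M)\cong 0$, the class $\{X\in\cat{D}(A^{\opn{op}}) : X\otimes^{\mrm{L}}_A M\cong 0\}$ is a localizing subcategory containing $\mrm{H}^0(A)$, hence it contains $A$, forcing $M = A\otimes^{\mrm{L}}_A M\cong 0$. The $G$-case is handled identically: $\{X\in\cat{D}(A) : \mrm{R}\opn{Hom}_A(X,M)\cong 0\}$ is also localizing, because $\mrm{R}\opn{Hom}$ takes coproducts in the first slot to products and a product of zeros vanishes, so the same argument produces $M\cong 0$.

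For (5), I would exhibit the commutative non-positive DG-ring $A=\k[x]$ with $\deg x = -2$ and zero differential, which has $\mrm{H}^0(A)=\k$ and manifestly unbounded cohomology, together with the nonzero DG-$A$-module $M = A[x^{-1}]$ obtained by inverting $x$. Since localization is flat, $F(M) = \k\otimes_A M = M/xM$, which vanishes because $x$ acts invertibly on $M$; using the length-one free resolution $A\xrightarrow{x} A$ of $\k$, one computes $G(M) = \mrm{R}\opn{Hom}_A(\k, M)$ as the fiber of multiplication by $x$ on $M$, which is likewise zero. Thus a single $M$ witnesses the non-conservativity of both functors. The main conceptual obstacle in the theorem is the generation step in (4): the truncation filtration of $A$ terminates in finitely many steps precisely when the cohomology is bounded, and the $x$-periodic object in (5) is engineered to exploit the failure of this finite assembly.
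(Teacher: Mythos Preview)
Your proposal is correct throughout. For items (1)--(4) the paper does not argue directly but cites \cite{YeDual}, \cite{ShINJ}, and \cite{ShWi}; your truncation arguments for (2)--(3) and the localizing-subcategory argument for (4) (building $A$ from its finitely many cohomology modules, each in the image of $S$, so that $\opn{Loc}(\mrm{H}^0(A))=\cat{D}(A)$) are precisely the standard mechanisms underlying those references, so there is no substantive divergence there. The genuine difference is in item (5). Both you and the paper take $A=\k[x]$ with $\deg x=-2$ and zero differential, but the paper uses the free module $M=\bigoplus_{n\in\mathbb{Z}}A[n]$ and asserts that the map $M[2]\to M$ induced by multiplication by $x$ is an isomorphism, whereas you take the localization $M=A[x^{-1}]$. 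Your choice makes the key point transparent: $x$ acts invertibly on $A[x^{-1}]$ by construction, so $F(M)=\k\otimes_A M=M/xM=0$ (flatness of localization) and $G(M)$ is the fiber of the isomorphism $x:M\to M[-2]$, hence zero. By contrast, for the paper's free module the induced map $M[2]\to M$ is the direct sum $\bigoplus_n\bigl(x\cdot:A[n+2]\to A[n]\bigr)$, which is a monomorphism with cokernel $\bigoplus_n\k[n]\ne 0$; equivalently, since that $M$ is free one computes $F(M)=\bigoplus_n\k[n]$ directly. So your witness is both simpler and avoids a delicate point in the paper's argument.
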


We should remark that most of this result is not new:
item (1) is trivial,
item (2) was shown in \cite[Proposition 3.1]{YeDual},
item (3) is \cite[Proposition 3.4]{ShINJ},
item (4) is \cite[Theorem 4.5]{ShWi}.
Our new contribution here is item (5),
but we chose to present the above theorem as a whole, 
as it gives a complete summary of the answer to these questions.

\section{Preliminaries}

A non-positive DG-ring $A = \bigoplus_{n=-\infty}^0 A^n$ is a graded ring $A$ together with a differential $d:A \to A$ of degree $+1$ which satisfies a Leibniz rule.
See \cite{Keller,YeBook} for background and notation regarding DG-rings and their derived categories.
All DG-rings in this paper are assumed to be non-positive.
A DG-ring $A$ is called commutative if it is graded-commutative and homogeneous elements $a$ of odd degree satisfy $a^2 = 0$.
For a non-positive DG-ring $A$,
we have that $\mrm{H}^0(A)$ is an ordinary ring.
Moreover, if $M$ is a DG-module over $A$,
then for any $n \in \mathbb{Z}$,
it holds that $\mrm{H}^n(M)$ is a $\mrm{H}^0(A)$-module.
We say that $A$ has bounded cohomology if $\mrm{H}^n(A) = 0$ for all $n\ll 0$.

As noted above, associated to $A$ are the reduction functor 
\[
F(-):=\mrm{H}^0(A)\otimes^{\mrm{L}}_A -:\cat{D}(A) \to \cat{D}(\mrm{H}^0(A)),
\]
the coreduction functor
\[
G(-):=\mrm{R}\opn{Hom}_A(\mrm{H}^0(A),-) : \cat{D}(A) \to \cat{D}(\mrm{H}^0(A)),
\]
and the forgetful functor $S:\cat{D}(\mrm{H}^0(A)) \to \cat{D}(A)$.
This give rise to an adjoint triple $F \dashv S \dashv G$.
In other words, the functor $F$ is a left adjoint to $S$,
while the functor $G$ is right adjoint to $S$.

The fact that $F, G$ and $S$ have adjoints allows one to reduce questions about their categorical properties to questions about the unit and counit maps of the adjunction:

\begin{prop}\label{prop:adjfaith}
Let $L \dashv R$ be an adjoint pair of functors,
where $L:\mathcal{C} \to \mathcal{D}$,
and its right adjoint $R:\mathcal{D} \to \mathcal{C}$.
\begin{enumerate}
\item The functor $R$ is faithful if and only if for any object $M$ of $\mathcal{D}$, the counit map $L(R(M)) \to M$ is an epimorphism.
\item The functor $R$ is full if and only if for any object $M$ of $\mathcal{D}$, the counit map $L(R(M)) \to M$ is a split monomorphism.
\item The functor $L$ is faithful if and only if for any object $M$ of $\mathcal{C}$, the unit map $M \to R(L(M))$ is a monomorphism.
\item The functor $L$ is full if and only if for any object $M$ of $\mathcal{C}$, the unit map $M \to R(L(M))$ is a split epimorphism.
\end{enumerate} 
\end{prop}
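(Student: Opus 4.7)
The plan is to reduce all four items to general categorical facts about morphisms being epi, mono, or split, via the adjunction isomorphism. Since items (3) and (4) concerning the left adjoint $L$ are dual to items (1) and (2) concerning the right adjoint $R$---passing to the opposite categories $\mathcal{C}^{\op}, \mathcal{D}^{\op}$ turns the adjunction $L \dashv R$ into $R^{\op} \dashv L^{\op}$, swapping unit with counit and monomorphisms with epimorphisms---it suffices to prove (1) and (2); the remaining items then follow formally by passing to opposites.

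The key observation I would establish first is that the action of $R$ on morphisms is identified with precomposition by the counit. Writing $\epsilon : LR \to \mathrm{id}_\mathcal{D}$ for the counit, the claim is that for any $X, Y \in \mathcal{D}$, under the adjunction isomorphism $\Hom_\mathcal{C}(R(X), R(Y)) \cong \Hom_\mathcal{D}(LR(X), Y)$, the map $R_{X,Y}: \Hom_\mathcal{D}(X,Y) \to \Hom_\mathcal{C}(R(X), R(Y))$ corresponds to precomposition by $\epsilon_X$, sending $\phi$ to $\phi \circ \epsilon_X$. This follows directly from the triangle identity and the standard formula for the adjunction bijection. In particular, $R_{X,Y}$ is injective (resp.\ surjective) if and only if the map $(-) \circ \epsilon_X : \Hom_\mathcal{D}(X, Y) \to \Hom_\mathcal{D}(LR(X), Y)$ is injective (resp.\ surjective).

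With this identification in hand, (1) and (2) reduce to standard categorical manipulations. For (1), $R$ is faithful if and only if $R_{X,Y}$ is injective for all $X, Y$, if and only if $(-) \circ \epsilon_X$ is injective as a map $\Hom_\mathcal{D}(X, Y) \to \Hom_\mathcal{D}(LR(X), Y)$ for every $Y$, which is precisely the definition of $\epsilon_X$ being an epimorphism. For (2), if $R$ is full then applying surjectivity of $(-) \circ \epsilon_X$ with $Y = LR(X)$ to $\mathrm{id}_{LR(X)}$ produces a morphism $r : X \to LR(X)$ with $r \circ \epsilon_X = \mathrm{id}_{LR(X)}$, exhibiting $\epsilon_X$ as a split monomorphism; conversely, given such a retraction $r$, any $g : LR(X) \to Y$ equals $(g \circ r) \circ \epsilon_X$, yielding surjectivity of $(-) \circ \epsilon_X$ and hence of $R_{X,Y}$.

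The only mildly delicate step is setting up the identification in the second paragraph cleanly, since it is the one point where the triangle identity actually enters; once this is in place, everything else is a routine two-categorical exercise with no real technical obstacle.
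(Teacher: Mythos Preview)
Your argument is correct and is precisely the standard textbook proof: identify the action of $R$ on hom-sets with precomposition by the counit via the adjunction bijection, then read off the equivalences with epimorphism and split monomorphism, and dualize for $L$. The paper does not spell out an argument at all---it simply records the statement as well known and refers to \cite[Theorem IV.3.1]{Mac}---so your proposal is exactly the kind of verification the citation is standing in for.
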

\begin{proof}
These well known facts follow easily from the definitions.
See for instance \cite[Theorem IV.3.1]{Mac} for details.
\end{proof}

Luckily for us, in triangulated categories,
all monomorphisms and epimorphisms split.
This is also well known, but it seems hard to find a citable reference,
so we include the easy proof (of a corollary of this fact) for the benefit of the reader.

\begin{prop}\label{prop:split}
Let $\mathcal{T}$ be a triangulated category.
\begin{enumerate}
\item If $f:X \to Y$ is a monomorphism in $\mathcal{T}$,
there is an object $Z$ in $\mathcal{T}$ such that $Y \cong X \oplus Z$.
\item If $f:X \to Y$ is an epimorphism in $\mathcal{T}$,
there is an object $Z$ in $\mathcal{T}$ such that $X \cong Y \oplus Z$.
\end{enumerate}
\end{prop}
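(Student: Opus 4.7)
The plan is to realize $f$ as one leg of a distinguished triangle and show that the connecting morphism must vanish, at which point the triangle becomes split.

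For part (1), I would complete $f$ to a distinguished triangle
\[
X \xrightarrow{f} Y \xrightarrow{g} Z \xrightarrow{h} X[1].
\]
Rotating backwards gives the distinguished triangle $Z[-1] \xrightarrow{-h[-1]} X \xrightarrow{f} Y$, in which consecutive arrows compose to zero, so $f \circ (-h[-1]) = 0$. The monomorphism hypothesis on $f$ then forces $h[-1] = 0$, and hence $h = 0$.

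With $h = 0$ in hand, the triangle splits: applying the cohomological functor $\Hom(Z,-)$ to the original triangle shows that $\mathrm{id}_Z$ lifts through $g$ to a section $s \colon Z \to Y$, and dually $\Hom(-,X)$ produces a retraction $r \colon Y \to X$ of $f$. The pair $(r,g)$ assembles into a map $Y \to X \oplus Z$ fitting into a morphism of distinguished triangles from the original triangle to the trivially split triangle $X \to X \oplus Z \to Z \xrightarrow{0} X[1]$, and the triangulated five-lemma delivers the desired isomorphism $Y \cong X \oplus Z$.

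Part (2) is strictly dual: complete $f$ to a distinguished triangle $W \to X \xrightarrow{f} Y \xrightarrow{g} W[1]$, observe that $g \circ f = 0$, use the epimorphism hypothesis on $f$ to conclude $g = 0$, and split the triangle exactly as above to get $X \cong Y \oplus W$. The whole argument is formal once one invokes the fundamental fact that consecutive morphisms in a distinguished triangle compose to zero; the mono/epi hypothesis then immediately annihilates the connecting morphism, and the splitting is automatic. I therefore do not expect any real obstacle in carrying this out.
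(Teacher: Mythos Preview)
Your proposal is correct and follows essentially the same approach as the paper: embed $f$ in a distinguished triangle, use that consecutive maps compose to zero together with the mono/epi hypothesis to annihilate the connecting morphism, and then conclude that the triangle splits. The only cosmetic differences are that the paper places $f$ as the second arrow of the triangle (avoiding the rotation step) and cites a reference for the splitting, whereas you spell out the splitting via cohomological functors and the five-lemma.
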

\begin{proof}
Assuming that $f$ is a monomorphism,
embed $f$ in a distinguished triangle of the form
\[
Z \xrightarrow{g} X \xrightarrow{f} Y \to Z[1]
\]
As shown in \cite[Proposition 1.1(a)]{RD}, the axioms of a triangulated category imply that $f\circ g = 0$. 
Since $f$ is a monomorphism, 
this implies that $g = 0$.
Hence, by \cite[tag 05QT]{SP},
this implies that $Y \cong X \oplus Z[1]$.
The proof of the second statement is almost identical,
so we omit it.
\end{proof}

Two classes of DG-modules that will play an important role in the sequel are the derived injective and derived projective DG-modules.
These generalize to the DG setting the injective and projective modules over an ordinary ring.
The derived injective DG-modules, 
first introduced in \cite{ShINJ},
and studied also in \cite{Min},
are the set $\opn{Inj}(A)$ of left DG-modules $I$,
with the property that either $I \cong 0$,
or $\injdim_A(I) = 0 = \inf(I)$,
where $\inf(I) = \inf\{n\in \mathbb{Z} \mid \mrm{H}^n(I) \ne 0\}$.
Dually, the derived projective DG-modules,
introduced in \cite{Min},
are the set $\opn{Proj}(A)$ of DG-modules $P$ such that either $P \cong 0$,
or $\projdim_A(P) = 0 = \sup(P)$,
with $\sup(P) = \sup\{n\in \mathbb{Z} \mid \mrm{H}^n(P) \ne 0\}$. 
Alternatively, objects of $\opn{Proj}(A)$ are exactly the direct summands of direct sums of copies of $A$.

\section{The reduction and the coreduction functors}

In this section we make a detailed study of the reduction functor 
\[
F(-):=\mrm{H}^0(A)\otimes^{\mrm{L}}_A -:\cat{D}(A) \to \cat{D}(\mrm{H}^0(A)),
\]
and the coreduction functor
\[
G(-):=\mrm{R}\opn{Hom}_A(\mrm{H}^0(A),-) : \cat{D}(A) \to \cat{D}(\mrm{H}^0(A)).
\]

We first show that the coreduction functor is never full and never faithful.

\begin{thm}\label{thm:coreduction}
Let $A$ be a non-positive DG-ring which is not equivalent to a ring.
Then the coreduction functor $G:\cat{D}(A) \to \cat{D}(\mrm{H}^0(A))$,
given by $G(-):=\mrm{R}\opn{Hom}_A(\mrm{H}^0(A),-)$ is not faithful and is not full.
\end{thm}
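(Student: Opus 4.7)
The approach is to combine Proposition \ref{prop:adjfaith} for the adjunction $S \dashv G$ with Proposition \ref{prop:split}. These reduce "$G$ is faithful" to the assertion that $M$ is a direct summand of $SG(M)$ in $\cat{D}(A)$ for every $M$, and "$G$ is full" to the assertion that $SG(M)$ is a direct summand of $M$ for every $M$; it therefore suffices to exhibit, in each case, a single object $M$ where the required summand relation fails, and the failure will be visible on graded cohomology. The common input is that $\mrm{H}^*$ is a functor from $\cat{D}(A)$ to graded $\mrm{H}^*(A)$-modules, and that, because $S$ is restriction of scalars along $\pi_A$ and the induced map $\mrm{H}^*(\pi_A)\colon \mrm{H}^*(A) \to \mrm{H}^0(A)$ vanishes on $\mrm{H}^{<0}(A)$, the graded subring $\mrm{H}^{<0}(A)$ annihilates $\mrm{H}^*(S(N))$ for every $N \in \cat{D}(\mrm{H}^0(A))$, and in particular annihilates $\mrm{H}^*(SG(M))$ for every $M$.

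For faithfulness I take $M = A$. The hypothesis $A \not\simeq \mrm{H}^0(A)$ gives some $n < 0$ with $\mrm{H}^n(A) \ne 0$; choose a cocycle $z \in A^n$ with $[z] \ne 0$. Then $[z] \cdot [1_A] = [z] \ne 0$, so $\mrm{H}^{<0}(A)$ does not annihilate $\mrm{H}^*(A)$. Since this annihilation property is inherited by any graded $\mrm{H}^*(A)$-summand, $\mrm{H}^*(A)$ cannot be a summand of $\mrm{H}^*(SG(A))$, so $A$ is not a summand of $SG(A)$ in $\cat{D}(A)$. By Proposition \ref{prop:split} the counit $SG(A) \to A$ is then not an epimorphism, and Proposition \ref{prop:adjfaith}(1) gives that $G$ is not faithful.

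For fullness I take $M = \tau^{\le -1}(A)$, which is nonzero precisely because $A \not\simeq \mrm{H}^0(A)$. The truncation triangle
\[
\tau^{\le -1}(A) \to A \to \mrm{H}^0(A) \xrightarrow{\delta} \tau^{\le -1}(A)[1]
\]
has connecting map $\delta$ that I claim is nonzero. Otherwise the triangle would split, giving $A \cong \mrm{H}^0(A) \oplus \tau^{\le -1}(A)$ in $\cat{D}(A)$; applying $\mrm{H}^*$ would then realize $\mrm{H}^0(A)$ as a graded $\mrm{H}^*(A)$-submodule of $\mrm{H}^*(A)$ containing $1$, which would have to equal all of $\mrm{H}^*(A)$ and hence force $\mrm{H}^{<0}(A) = 0$, contradicting the faithfulness paragraph. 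Thus $\delta$ represents a nonzero class in $\opn{Ext}^1_A(\mrm{H}^0(A), \tau^{\le -1}(A)) = \mrm{H}^1(G(\tau^{\le -1}(A)))$, so $\mrm{H}^1(SG(\tau^{\le -1}(A))) \ne 0$. But $\mrm{H}^1(\tau^{\le -1}(A)) = 0$, so $SG(\tau^{\le -1}(A))$ cannot be a summand of $\tau^{\le -1}(A)$, the counit at this $M$ is not a split monomorphism, and Proposition \ref{prop:adjfaith}(2) yields that $G$ is not full. The main subtlety is the non-vanishing of $\delta$, which ultimately relies on the same cohomological input as the faithfulness argument.
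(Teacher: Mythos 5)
Your proof is correct, and it takes a genuinely different route from the paper's. The paper exhibits both failures on a single test object: a derived injective DG-module $E \in \opn{Inj}(A)$ whose $\mrm{H}^0$ is an injective $\mrm{H}^0(A)$-module receiving a monomorphism from $\mrm{H}^n(A)$, and it leans on the machinery of \cite{ShINJ} (existence of such $E$, the identification $G(E)\cong \mrm{H}^0(E)$, the formula $\mrm{H}^{-n}(E)\cong\opn{Hom}_{\mrm{H}^0(A)}(\mrm{H}^n(A),\mrm{H}^0(E))$, and closure of $\opn{Inj}(A)$ under direct summands) to show that the counit $S(G(E))\to E$ is neither an epimorphism nor a split monomorphism. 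You instead test on $A$ for faithfulness and on $\tau^{\le -1}(A)$ for fullness, replacing that machinery by the elementary observation that every object in the image of $S$ has cohomology annihilated by $\mrm{H}^{<0}(A)$ as a graded $\mrm{H}^*(A)$-module while $\mrm{H}^*(A)$ itself is not, and by the nonvanishing of the connecting map of the truncation triangle, which gives $\mrm{H}^1(SG(\tau^{\le -1}(A)))\ne 0$ against $\mrm{H}^1(\tau^{\le -1}(A))=0$. The logical reductions via \cref{prop:adjfaith} and \cref{prop:split} are applied correctly (faithfulness forces the counit to be an epimorphism, hence $M$ a summand of $SG(M)$; fullness forces it to be a split monomorphism, hence $SG(M)$ a summand of $M$), and the one compressed step, that a splitting of the truncation triangle forces $\mrm{H}^{<0}(A)=0$, is sound: any section $s$ of the canonical map $A\to\mrm{H}^0(A)$ satisfies $\mrm{H}^0(s)=\mathrm{id}$, so the image of $\mrm{H}^*(s)$ is a graded $\mrm{H}^*(A)$-submodule of $\mrm{H}^*(A)$ containing $1$ yet concentrated in degree $0$. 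What your argument buys is self-containedness (no input from the theory of derived injectives) and symmetry with the paper's easier reduction-functor proof, which likewise tests on $A$; what the paper's argument buys is a single witness object for both failures and consistency with the derived-injective toolkit it develops and reuses elsewhere.
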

\begin{proof}
Let $S:\cat{D}(\mrm{H}^0(A)) \to \cat{D}(A)$ be the forgetful functor, so that $G$ is right adjoint to $S$.
Since $A$ is not equivalent to a ring,
there exist some $n<0$ such that $\mrm{H}^{n}(A) \ne 0$.
Considering $\mrm{H}^n(A)$ as a left $\mrm{H}^0(A)$-module,
let $\bar{E}$ be a left injective $\mrm{H}^0(A)$-module such that there is a monomorphism $g:\mrm{H}^n(A) \inj \bar{E}$.
By \cite[Theorem 5.7]{ShINJ},
there exist $E \in \opn{Inj}(A)$ such that $\mrm{H}^0(E) = \bar{E}$.
To show that $G$ is not full and not faithful,
since $G$ is a right adjoint,
by \cref{prop:adjfaith},
it is enough to show that the counit map
\[
S(G(E)) \to E
\]
is not an epimorphism and is not a split monomorphism.
By \cite[Proposition 3.8]{ShINJ},
there is an isomorphism
\[
G(E) = \mrm{R}\opn{Hom}_A(\mrm{H}^0(A),E) \cong \mrm{H}^0(E) = \bar{E}.
\]
Assuming there is an epimorphism $\mrm{H}^0(E) \to E$,
it follows from \cref{prop:split} that there is some $Z \in \cat{D}(A)$ such that $\mrm{H}^0(E)\cong E \oplus Z$.
We will show this cannot be the case.
First, note that
\[
\mrm{H}^{-n}\left(\mrm{H}^0(E)\right) = 0,
\]
while
\[
\mrm{H}^{-n}(E\oplus Z) = \mrm{H}^{-n}(E)\oplus \mrm{H}^{-n}(Z).
\]
By \cite[Corollary 4.12]{ShINJ},
there is an isomorphism
\[
\mrm{H}^{-n}(E) \cong \opn{Hom}_{\mrm{H}^0(A)}(\mrm{H}^n(A),\mrm{H}^0(E)).
\]
Since $0\ne g \in \opn{Hom}_{\mrm{H}^0(A)}(\mrm{H}^n(A),\mrm{H}^0(E))$,
we see that $\mrm{H}^{-n}(E) \ne 0$,
showing that there is no epimorphism $S(G(E)) \to E$.
Supposing that there is a split monomorphism $\mrm{H}^0(E) \to E$, it follows that $\mrm{H}^0(E)$ is a direct summand of $E$, so by \cite[Proposition 5.2]{ShINJ} we deduce that $\mrm{H}^0(E) \in \opn{Inj}(A)$.
But then, it follows from \cite[Corollary 4.12]{ShINJ} that
\[
0 = \mrm{H}^{-n}\left(\mrm{H}^0(E)\right) =
\opn{Hom}_{\mrm{H}^0(A)}(\mrm{H}^n(A),\mrm{H}^0(E)) \ne 0,
\]
which is a contradiction.
Since $S(G(E)) \to E$ is not an epimorphism and not a split monomorphism, 
we deduce that $G$ it not faithful and is not full.
\end{proof}

We now prove the dual result about the reduction functor.
The proof is similar, but easier, because $A$ itself is derived projective.
\begin{thm}
Let $A$ be a non-positive DG-ring which is not equivalent to a ring.
Then the reduction functor $F:\cat{D}(A) \to \cat{D}(\mrm{H}^0(A))$,
given by $F(-):=\mrm{H}^0(A)\otimes^{\mrm{L}}_A -$ is not faithful and is not full.
\end{thm}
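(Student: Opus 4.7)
The approach is directly dual to the proof of \cref{thm:coreduction}, using the adjoint pair $F \dashv S$ together with \cref{prop:adjfaith}. The key simplification over the coreduction case is that $A$ itself is a derived projective $A$-module, so the test object in our unit-map analysis can simply be taken to be $A$, bypassing the need for any derived projective analogue of \cite[Theorem 5.7]{ShINJ}. Concretely, for $M = A$ we have $F(A) = \mrm{H}^0(A) \otimes^{\mrm{L}}_A A \cong \mrm{H}^0(A)$, so the unit of the adjunction at $A$ is identified with the canonical map $\pi_A : A \to \mrm{H}^0(A)$. By parts (3) and (4) of \cref{prop:adjfaith}, it therefore suffices to show that $\pi_A$ is neither a monomorphism nor a split epimorphism in $\cat{D}(A)$.

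For the monomorphism case, I argue exactly as in \cref{thm:coreduction}. Since $A$ is not equivalent to $\mrm{H}^0(A)$, there exists $n < 0$ with $\mrm{H}^n(A) \ne 0$. If $\pi_A$ were a monomorphism, \cref{prop:split} would produce $Z \in \cat{D}(A)$ with $\mrm{H}^0(A) \cong A \oplus Z$, and taking $\mrm{H}^n$ of this decomposition would yield the contradiction $0 = \mrm{H}^n(A) \oplus \mrm{H}^n(Z)$.

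For the split epimorphism case, suppose toward contradiction that there is a section $s : \mrm{H}^0(A) \to A$ in $\cat{D}(A)$ with $\pi_A \circ s = \mathrm{id}_{\mrm{H}^0(A)}$. The plan is to deduce that $s$ is in fact a two-sided inverse of $\pi_A$, which makes $\pi_A$ an isomorphism in $\cat{D}(A)$ and contradicts the hypothesis that $A$ is not equivalent to $\mrm{H}^0(A)$. The key observation is that applying $\mrm{H}^0$ induces a ring isomorphism $\opn{End}_{\cat{D}(A)}(A) \cong \opn{End}_{\mrm{H}^0(A)}(\mrm{H}^0(A)) = \mrm{H}^0(A)$; in particular, an endomorphism of $A$ in $\cat{D}(A)$ equals $\mathrm{id}_A$ if and only if the induced endomorphism of $\mrm{H}^0(A)$ equals $\mathrm{id}_{\mrm{H}^0(A)}$. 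Applying $\mrm{H}^0$ to $\pi_A \circ s = \mathrm{id}_{\mrm{H}^0(A)}$ and using $\mrm{H}^0(\pi_A) = \mathrm{id}_{\mrm{H}^0(A)}$ forces $\mrm{H}^0(s) = \mathrm{id}_{\mrm{H}^0(A)}$; hence $\mrm{H}^0(s \circ \pi_A) = \mrm{H}^0(s) \circ \mrm{H}^0(\pi_A) = \mathrm{id}_{\mrm{H}^0(A)}$, and the above observation yields $s \circ \pi_A = \mathrm{id}_A$. No significant obstacle is anticipated: the argument is essentially self-contained once the identification $\opn{End}_{\cat{D}(A)}(A) \cong \mrm{H}^0(A)$ is in hand, and the self-projectivity of $A$ spares us any search for a suitable test object.
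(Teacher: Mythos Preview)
Your argument is correct and follows the same overall strategy as the paper: test the unit of the adjunction $F \dashv S$ at $M = A$, identify this with $\pi_A : A \to \mrm{H}^0(A)$, and rule out both the monomorphism and split-epimorphism cases. The monomorphism case is handled identically.

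The one point of divergence is the split-epimorphism case. The paper observes that a section would make $\mrm{H}^0(A)$ a direct summand of $A$, hence an object of $\opn{Proj}(A)$, and then invokes \cite[Lemma 2.8(3)]{Min} to conclude $A \cong \mrm{H}^0(A)$. Your argument instead uses the elementary identification $\opn{End}_{\cat{D}(A)}(A) \cong \mrm{H}^0(A)$ to upgrade the one-sided inverse $s$ to a two-sided inverse of $\pi_A$ directly. Your route is slightly more self-contained, avoiding the external reference and the language of derived projectives; the paper's route, on the other hand, makes explicit the structural reason (membership in $\opn{Proj}(A)$) and aligns with the derived-injective argument used in \cref{thm:coreduction}. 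Both reach the same contradiction with essentially the same effort.
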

\begin{proof}
As in the proof of \cref{thm:coreduction},
the functor $F$ is left adjoint to the forgetful functor $S:\cat{D}(\mrm{H}^0(A)) \to \cat{D}(A)$.
Let $n<0$ be such that $\mrm{H}^n(A) \ne 0$.
Applying \cref{prop:adjfaith} for $M = A$
we must show that the map
\[
A \to S(\mrm{H}^0(A)\otimes^{\mrm{L}}_A A)
\]
is not a monomorphism and is not a split epimorphism.
Clearly, $\mrm{H}^0(A)\otimes^{\mrm{L}}_A A \cong \mrm{H}^0(A)$.
If there was a monomorphism $A \to \mrm{H}^0(A)$,
by \cref{prop:split} we would deduce that $A$ is a direct summand of $\mrm{H}^0(A)$,
and then the fact that $\mrm{H}^n(A) \ne 0$ would imply that $\mrm{H}^n(\mrm{H}^0(A)) \ne 0$, which is absurd.
On the other hand, if there was a split epimorphism $A \to \mrm{H}^0(A)$,
the by \cref{prop:split}, 
the DG-module $\mrm{H}^0(A)$ would have to be a direct summand of $A$,
which would imply that $\mrm{H}^0(A) \in \opn{Proj}(A)$.
Since $\mrm{H}^0(A) = \mrm{H}^0(\mrm{H}^0(A))$,
this would imply by \cite[Lemma 2.8(3)]{Min} that there is an isomorphism $A \cong \mrm{H}^0(A)$,
which contradicts the assumption that $A$ is an honest DG-ring.
Hence, $F$ is not faithful and it not full.
\end{proof}

We finish this section by discussing the conservative property for the reduction and coreduction functors.
Recall, as discussed in the introduction,
that the reduction functor is always conservative on the bounded above derived category,
and that the coreduction functor is always conservative on the bounded below derived category.
Moreover, if $A$ itself has bounded cohomology, 
then reduction and coreduction are both conservative on the entire unbounded derived category.
Thus, the only remaining question is whether they remain conservative on the unbounded derived category if $A$ has unbounded cohomology. The next result shows that in general, they are not. 

\begin{thm}
There exist a commutative DG-ring $A$ with unbounded cohomology such that the reduction functor $\mrm{H}^0(A)\otimes^{\mrm{L}}_A -:\cat{D}(A) \to \cat{D}(\mrm{H}^0(A))$ and the coreduction functor $\mrm{R}\opn{Hom}_A(\mrm{H}^0(A),-):\cat{D}(A) \to \cat{D}(\mrm{H}^0(A))$ are not conservative.
\end{thm}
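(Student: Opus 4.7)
The plan is to exhibit a commutative non-positive DG-ring $A$ with unbounded cohomology and a nonzero object $M \in \cat{D}(A)$ for which both $F(M) = \mrm{H}^0(A) \otimes^{\mrm{L}}_A M$ and $G(M) = \mrm{R}\opn{Hom}_A(\mrm{H}^0(A), M)$ vanish; by the cone argument in triangulated categories recalled in the introduction, this is enough to show that neither the reduction nor the coreduction functor is conservative on all of $\cat{D}(A)$.

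Fix a field $\k$ and let $A := \k[\epsilon]$ be the graded polynomial ring on a single variable $\epsilon$ of cohomological degree $-2$, equipped with zero differential. Since $|\epsilon|$ is even, $A$ is strictly graded-commutative and, by construction, non-positive; its cohomology equals $A$ itself, so $\mrm{H}^0(A) = \k$ while $\mrm{H}^{-2n}(A) = \k \cdot \epsilon^n \ne 0$ for every $n \ge 0$, in particular $A$ has unbounded cohomology. For the candidate DG-module I would take the Laurent DG-module $M := A[\epsilon^{-1}]$, concentrated in even degrees with $M^{2n} = \k$ for every $n \in \mathbb{Z}$ and zero differential; the feature driving everything is that multiplication by $\epsilon$ is an isomorphism $M^n \iso M^{n-2}$ in every degree.

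The main technical step is an explicit semi-free resolution of $\k = \mrm{H}^0(A)$ over $A$. I would take $P := A \langle y \rangle$ obtained by adjoining a single free generator $y$ in degree $-3$ with $dy = \epsilon$, so that $P = A \oplus Ay$ as a graded $A$-module. A direct inspection shows that in each odd negative degree $-2n-3$ the only nonzero elements are the multiples of $\epsilon^n y$, and that $d$ maps $\k \cdot \epsilon^n y$ isomorphically onto $\k \cdot \epsilon^{n+1} \subset P^{-2n-2}$; hence $\mrm{H}^*(P)$ is concentrated in degree $0$ where it equals $\k$, so $P$ is a semi-free resolution of $\k$ and may be used to compute both $F(M)$ and $G(M)$.

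With this resolution in hand, both computations reduce to the invertibility of $\epsilon$ on $M$. Writing $P \otimes_A M = (1 \otimes M) \oplus (y \otimes M)$ with induced differential $d(y \otimes m) = 1 \otimes \epsilon m$, a degree-$k$ cocycle corresponds to a pair $(m_0, m_1) \in M^k \oplus M^{k+3}$ satisfying $\epsilon m_1 = 0$, which forces $m_1 = 0$; meanwhile the coboundaries in degree $k$ exhaust $M^k \oplus \{0\}$ because $\epsilon : M^{k+2} \to M^k$ is surjective. Thus $F(M)$ is acyclic. The computation of $G(M) = \opn{Hom}_A(P, M)$ is formally dual: a degree-$k$ cocycle is a pair $(n_0, n_1) \in M^k \oplus M^{k-3}$ with $\epsilon n_0 = 0$, forcing $n_0 = 0$, and the coboundaries again fill the surviving factor by surjectivity of $\epsilon$. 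Since $M \not\cong 0$, this delivers the desired counterexample. The step requiring insight is the choice of the pair $(A, M)$; once one settles on this ``Tate-style'' 2-periodic module over the graded polynomial ring on a single generator of even negative degree, the remaining verifications reduce to routine linear algebra.
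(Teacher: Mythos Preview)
Your argument is correct. You and the paper work with the same DG-ring $A=\k[\epsilon]$ (the paper writes $t$ for your $\epsilon$) with $|\epsilon|=-2$ and zero differential, and with the same underlying mechanism: the two-term Koszul resolution $A[2]\xrightarrow{\cdot\epsilon}A$ of $\k=\mrm{H}^0(A)$ --- your $P=A\langle y\rangle$ with $dy=\epsilon$ is precisely the cone of this map, which the paper packages as the distinguished triangle $A[2]\to A\to\k\to A[3]$ --- so that both $\k\otimes^{\mrm{L}}_A M$ and $\mrm{R}\opn{Hom}_A(\k,M)$ vanish as soon as $\epsilon$ acts invertibly on $M$. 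The genuine difference lies in the choice of test module. You take the localization $M=A[\epsilon^{-1}]$, on which invertibility of $\epsilon$ is tautological. The paper instead takes $M=\bigoplus_{n\in\mathbb{Z}}A[n]$ and asserts that the induced map $M[2]\to M$ (multiplication by $\epsilon$) is an isomorphism; however, on this free $A$-module multiplication by $\epsilon$ is \emph{not} surjective (no free generator $e_n$ lies in its image), and since $\bigoplus_n A[n]$ is already semi-free one computes directly $\k\otimes^{\mrm{L}}_A\bigl(\bigoplus_n A[n]\bigr)\cong\bigoplus_n\k[n]\not\cong 0$. So your choice of $M$ is not a mere stylistic variant: it repairs an actual slip in the paper's argument, while your explicit cochain-level verification and the paper's triangle manipulation are two presentations of the same computation.
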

\begin{proof}
Let $\K$ be a field,
and let $A=\K[t]$ be a polynomial ring over $\K$,
considered as a non-positive DG-ring with zero differential,
where $\deg(t) = -2$.
This DG-ring was also considered in \cite[Example 7.26]{YeDual},
where it was observed that there is a map of DG-modules,
the inclusion map $\phi:A[2] \to A$
(which one can consider simply as multiplication by $t$),
and that moreover the cone of $\phi$ is naturally isomorphic to $\mrm{H}^0(A)$.
In other words,
there is a distinguished triangle
\begin{equation}\label{eqn:triangle}
A[2] \xrightarrow{\phi} A \to \mrm{H}^0(A) \to \left(A[2]\right)[1]
\end{equation}
in $\cat{D}(A)$.
Consider the DG-module $M = \K[t,t^{-1}]$.
Here, $t$ is of degree $-2$, while $t^{-1}$ is of degree $+2$.
Thus, as a graded abelian group $M^{2n} = \K$ and $M^{2n+1} = 0$ for all $n \in \mathbb{Z}$,
and $M$ becomes a DG-module over $A$ in the obvious way with the zero differential. 
Since $M$ has non-zero cohomology,
it follows that $M \ncong 0$ in $\cat{D}(A)$.
To calculate its reduction $\mrm{H}^0(A)\otimes^{\mrm{L}}_A M$,
we apply the triangulated functor $-\otimes^{\mrm{L}}_A M$ to \cref{eqn:triangle},
and obtain the distinguished triangle
\[
A[2] \otimes^{\mrm{L}}_A M \to A \otimes^{\mrm{L}}_A M \to \mrm{H}^0(A) \otimes^{\mrm{L}}_A M \to \left(A[2]\right)[1]\otimes^{\mrm{L}}_A M
\]
Here, the first map is also induced from the multiplication by $t$ map.
However, the definition of $M$ makes it clear that this map is an isomorphism,
which implies that $\mrm{H}^0(A) \otimes^{\mrm{L}}_A M \cong 0$.
This shows that the reduction functor $\mrm{H}^0(A) \otimes^{\mrm{L}}_A - :\cat{D}(A) \to \cat{D}(\mrm{H}^0(A))$ is not conservative.
Similarly,
applying the contravariant triangulated functor $\mrm{R}\opn{Hom}_A(-,M)$ to \cref{eqn:triangle},
we obtain the distinguished triangle
\[
\mrm{R}\opn{Hom}_A(\mrm{H}^0(A),M) \to \mrm{R}\opn{Hom}_A(A,M) \to \mrm{R}\opn{Hom}_A(A[2],M) \to \left(\mrm{R}\opn{Hom}_A(\mrm{H}^0(A),M)\right)[1],
\]
or more explicitly:
\[
\mrm{R}\opn{Hom}_A(\mrm{H}^0(A),M) \to M \to M[-2] \to \left(\mrm{R}\opn{Hom}_A(\mrm{H}^0(A),M)\right)[1].
\]
As above, the map $M \to M[-2]$ is an isomorphism,
which implies that $\mrm{R}\opn{Hom}_A(\mrm{H}^0(A),M) \cong 0$.
Hence, the coreduction functor $\mrm{R}\opn{Hom}_A(\mrm{H}^0(A),-)$ is not conservative.
\end{proof}

\section{The forgetful functor}

In this section discuss the categorical properties of the forgetful functor $S:\cat{D}(\mrm{H}^0(A)) \to \cat{D}(A)$.
In the proof of the next result, 
we can use either derived projectives or derived injectives.
We take derived projectives, 
as they are simpler.
\begin{thm}
Let $A$ be a non-positive DG-ring which is not equivalent to a ring.
Then the forgetful functor $S:\cat{D}(\mrm{H}^0(A))) \to \cat{D}(A)$ is not full.
\end{thm}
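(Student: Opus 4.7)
The plan is to apply \cref{prop:adjfaith}(2) to the adjunction $F \dashv S$: the forgetful functor $S$ is full if and only if the counit $\varepsilon_N : F(S(N)) \to N$ is a split monomorphism for every $N \in \cat{D}(\mrm{H}^0(A))$. I would test this on the derived projective generator $N = \mrm{H}^0(A) \in \cat{D}(\mrm{H}^0(A))$; then $S(N) = \mrm{H}^0(A)$ viewed as a DG-$A$-module, $F(S(N)) = \mrm{H}^0(A) \otimes^{\mrm{L}}_A \mrm{H}^0(A)$, and $\varepsilon_N$ is the multiplication map $\mu$.

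Suppose for contradiction that $\mu$ is a split monomorphism. By \cref{prop:split} there is $Z \in \cat{D}(\mrm{H}^0(A))$ with $\mrm{H}^0(A) \cong (\mrm{H}^0(A) \otimes^{\mrm{L}}_A \mrm{H}^0(A)) \oplus Z$. The left-hand side has no cohomology outside degree $0$, so both summands are concentrated there; and because $\mrm{H}^0(\mu)$ is manifestly the identity on $\mrm{H}^0(A)$, also $\mrm{H}^0(Z) = 0$. Hence $Z \cong 0$ and $\mu$ is an isomorphism in $\cat{D}(\mrm{H}^0(A))$. Feeding this into the triangle $I \to A \xrightarrow{\pi_A} \mrm{H}^0(A) \to I[1]$ (with $I = \ker \pi_A$) and applying $\mrm{H}^0(A) \otimes^{\mrm{L}}_A -$, the middle map becomes $x \mapsto x \otimes 1$, which is a one-sided inverse of $\mu$ and hence also an isomorphism; so $\mrm{H}^0(A) \otimes^{\mrm{L}}_A I \cong 0$.

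To reach a contradiction, let $n_0 = \max\{n < 0 : \mrm{H}^n(A) \ne 0\}$, which exists by hypothesis. Then $\mrm{H}^n(I) = 0$ for $n > n_0$ and $\mrm{H}^{n_0}(I) = \mrm{H}^{n_0}(A)$, so smart truncation yields a triangle
\[
\tau^{<n_0}(I) \longrightarrow I \longrightarrow \mrm{H}^{n_0}(A)[-n_0] \longrightarrow \tau^{<n_0}(I)[1]
\]
in $\cat{D}(A)$. Applying $\mrm{H}^0(A) \otimes^{\mrm{L}}_A -$ and using the vanishing of the middle term, I obtain
\[
\mrm{H}^0(A) \otimes^{\mrm{L}}_A \mrm{H}^{n_0}(A)[-n_0] \;\cong\; \bigl(\mrm{H}^0(A) \otimes^{\mrm{L}}_A \tau^{<n_0}(I)\bigr)[1].
\]
Taking $\mrm{H}^{n_0}$, the left-hand side equals $\mrm{H}^0(A) \otimes_{\mrm{H}^0(A)} \mrm{H}^{n_0}(A) = \mrm{H}^{n_0}(A) \ne 0$, because the $A$-action on $\mrm{H}^{n_0}(A)$ factors through $\pi_A$. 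The right-hand side is the $\mrm{H}^{n_0+1}$ of a derived tensor of two objects whose suprema are $0$ and at most $n_0 - 1$; by the sup-subadditivity of derived tensor products over a non-positive DG-ring, this object has supremum at most $n_0 - 1$, so its $\mrm{H}^{n_0+1}$ vanishes. This contradiction shows $\mu$ is not a split monomorphism, and therefore $S$ is not full.

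The principal technical obstacle is the final cohomology computation, which rests on two standard facts about non-positive DG-rings that would need to be cited carefully: the sup-subadditivity $\sup(X \otimes^{\mrm{L}}_A Y) \le \sup X + \sup Y$ for derived tensor products, and the identification of the top cohomology of such a derived tensor with the ordinary tensor of top cohomologies over $\mrm{H}^0(A)$. Once these are in hand, the rest of the argument is essentially a packaging of \cref{prop:adjfaith,prop:split}.
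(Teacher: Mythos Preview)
Your proof is correct and follows essentially the same route as the paper: both use \cref{prop:adjfaith} for the adjunction $F \dashv S$, test the counit on $N = \mrm{H}^0(A)$, and deduce that $\mrm{H}^0(A)\otimes^{\mrm{L}}_A \mrm{H}^0(A)$ is a direct summand of $\mrm{H}^0(A)$, hence isomorphic to $\mrm{H}^0(A)$ itself. The only difference is in the endgame: the paper cites \cite[Proposition 3.3(1)]{YeDual} to conclude $A \cong \mrm{H}^0(A)$, whereas you inline that argument by passing to the triangle on $I = \ker\pi_A$, showing $\mrm{H}^0(A)\otimes^{\mrm{L}}_A I \cong 0$, and deriving a contradiction via sup-subadditivity of $\otimes^{\mrm{L}}$ and the identification of top cohomology --- which is exactly the content of the conservativity result you are reproving.
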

\begin{proof}
Since $S$ is right adjoint to $F$,
by \cref{prop:adjfaith},
it is full if and only if 
for any object $M \in \cat{D}(\mrm{H}^0(A))$,
the map $\mrm{H}^0(A)\otimes^{\mrm{L}}_A S(M) \to M$ is a split monomorphism.
If this is the case, 
taking $M = \mrm{H}^0(A)$,
we deduce that $\mrm{H}^0(A)\otimes^{\mrm{L}}_A \mrm{H}^0(A)$ is a direct summand of $\mrm{H}^0(A)$.
Hence, $\mrm{H}^0(A)\otimes^{\mrm{L}}_A \mrm{H}^0(A)$
is isomorphic to a projective $\mrm{H}^0(A)$-module.
By the {K}\"{u}nneth trick (as in the proof of \cite[Proposition 3.1]{YeDual}),
it holds that
\[
\mrm{H}^0\left(\mrm{H}^0(A)\otimes^{\mrm{L}}_A \mrm{H}^0(A)\right) \cong \mrm{H}^0(A),
\]
so the above implies that
\[
\mrm{H}^0(A)\otimes^{\mrm{L}}_A \mrm{H}^0(A) \cong \mrm{H}^0(A).
\]
This in turn implies,
by \cite[Proposition 3.3(1)]{YeDual},
that there is an isomorphism $A \cong \mrm{H}^0(A)$ in $\cat{D}(A)$,
contradicting the fact that $A$ is an honest DG-ring.
Hence, $S$ is not full.
\end{proof}

The careful reader probably noticed that we left faithfulness out of the above result.
The reason for this is that the forgetful functor may be faithful sometimes.

\begin{thm}
There exist a commutative noetherian DG-ring $A$ with bounded cohomology, which is not equivalent to a ring,
such that the forgetful functor $S:\cat{D}(\mrm{H}^0(A)) \to \cat{D}(A)$ is faithful.
\end{thm}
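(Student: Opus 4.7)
The plan is to exhibit $A = \K[t]/(t^2)$ with $\deg(t) = -2$ and zero differential, where $\K$ is a field. Because $t$ is in even degree and $t^2 = 0$ is the defining relation, $A$ is graded-commutative; being finite-dimensional over $\K$, it is noetherian. Its cohomology equals $A$ itself (the differential is zero), so is bounded with $\mrm{H}^0(A) = \mrm{H}^{-2}(A) = \K$ and zero in all other degrees. In particular $A$ is not equivalent to a ring, and crucially $\mrm{H}^0(A) = \K$ is a field.

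Next, I would apply \cref{prop:adjfaith}(1) to the adjunction $F \dashv S$, reducing the faithfulness of $S$ to the assertion that for every $M \in \cat{D}(\K)$ the counit $\epsilon_M \colon F(S(M)) = \K \otimes^{\mrm{L}}_A S(M) \to M$ is an epimorphism in $\cat{D}(\K)$. Since $\K$ is a field, $\cat{D}(\K)$ is just the category of $\mathbb{Z}$-graded $\K$-vector spaces and an epimorphism in it is precisely a surjection on cohomology in each degree. Moreover every such $M$ decomposes as $M \cong \bigoplus_i \K[n_i]$. By the adjoint triple $F \dashv S \dashv G$, both $F$ and $S$ are left adjoints and hence preserve arbitrary direct sums (and trivially shifts), so $\epsilon_M$ decomposes as a direct sum of shifts of $\epsilon_\K$. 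Since a direct sum of surjections on each cohomological degree is again such a surjection, it suffices to treat the single case $M = \K$.

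For $M = \K$, the result \cite[Proposition 3.1]{YeDual} recalled in the excerpt gives $\mrm{H}^0(\K \otimes^{\mrm{L}}_A \K) \cong \K$, and the identification of $\mrm{H}^0(\epsilon_\K)$ with the multiplication map $\K \otimes_\K \K \to \K$ shows that it is an isomorphism. In every other cohomological degree the target $\K$ vanishes, so $\mrm{H}^n(\epsilon_\K)$ is trivially surjective. Hence $\epsilon_\K$ is an epimorphism, and consequently every $\epsilon_M$ is, which proves $S$ is faithful. The main subtlety is not in any calculation but in the choice of example: arranging $\mrm{H}^0(A)$ to be a field trivialises the target category $\cat{D}(\mrm{H}^0(A))$ sufficiently for the reduction-by-direct-sums argument to propagate a single computation to all of $\cat{D}(\mrm{H}^0(A))$. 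The obstructions to fullness coming from the previous theorem (which force $\mrm{H}^0(A) \otimes^{\mrm{L}}_A \mrm{H}^0(A)$ to be a projective, hence trivial, $\mrm{H}^0(A)$-module) simply do not bite at the weaker level of an epimorphism, and recognising this asymmetry between fullness and faithfulness is the conceptual heart of the example.
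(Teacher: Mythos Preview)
Your argument is correct. The example $A=\K[t]/(t^2)$ with $\deg(t)=-2$ is a commutative noetherian DG-ring with bounded cohomology that is not equivalent to a ring, and your reduction of faithfulness to the single counit $\epsilon_\K$ via \cref{prop:adjfaith}(1), the semisimplicity of $\cat{D}(\K)$, and preservation of coproducts by $F$ and $S$ is sound; the degree-$0$ computation finishes it.

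The paper's proof, however, is considerably shorter and more conceptual, and it is worth noting that it applies directly to your example too. The paper takes a trivial extension $B=A\ltimes M[1]$ and observes that the projection $\pi\colon B\to \mrm{H}^0(B)=A$ admits a \emph{DG-ring section} $\tau\colon A\to B$; hence $\tau_*\circ\pi_*=1_{\cat{D}(A)}$, and a functor with a left inverse is faithful. Your $A=\K[t]/(t^2)$ likewise has a DG-ring section $\K\hookrightarrow A$ of $\pi\colon A\to\K$, so the same one-line argument works for it without any analysis of counits or of $\cat{D}(\K)$. The advantage of your route is that it illustrates exactly how the asymmetry between fullness and faithfulness in \cref{prop:adjfaith} plays out at the level of the counit, and it makes explicit why the obstruction used in the previous theorem (forcing $\mrm{H}^0(A)\otimes^{\mrm{L}}_A\mrm{H}^0(A)$ to be projective) does not block mere epimorphy. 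The advantage of the paper's route is generality: it never uses that $\mrm{H}^0(A)$ is a field, and it yields faithfulness for any non-positive DG-ring whose augmentation $A\to\mrm{H}^0(A)$ splits as DG-rings.
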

\begin{proof}
Let $A$ be a commutative ring,
and let $M$ be a non-zero $A$-module.
Consider the trivial extension DG-ring $B = A \skewtimes M[1]$.
This is a non-positive DG-ring with zero differential,
with $B^{-1} = M$, $B^0 = A$, and $B^n = 0$ for all $n \notin \{-1,0\}$.
Observe that the natural map $\pi:B \to \mrm{H}^0(B) = A$ has a one-sided inverse $\tau:A \to B$ given by $\tau(a) = a$.
It then holds that $\pi \circ \tau = 1_A$.
If we denote by $\pi_*$ (respectively $\tau_*$) the forgetful functor $\cat{D}(\mrm{H}^0(B)) \to \cat{D}(B)$
(resp. $\cat{D}(B) \to \cat{D}(\mrm{H}^0(B))$),
then it follows that
\[
\tau_*\circ \pi_* = 1_{\cat{D}(A)},
\]
and this implies that the forgetful functor
$\cat{D}(\mrm{H}^0(B)) \to \cat{D}(B)$ is faithful.
\end{proof}

In our next result we show that the forgetful functor $S:\cat{D}(A) \to \cat{D}(\mrm{H}^0(A))$ may fail to be faithful. Constructing such an example is more difficult.

\begin{thm}
There exist a commutative noetherian DG-ring $B$ with bounded cohomology such that the forgetful functor
\[
S:\cat{D}(\mrm{H}^0(B)) \to \cat{D}(B)
\]
is not faithful.
\end{thm}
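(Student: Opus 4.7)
My plan is to apply \cref{prop:adjfaith}(1) to the adjoint pair $F \dashv S$: this reduces the question of whether $S$ is faithful to whether, for every object $M \in \cat{D}(\mrm{H}^0(B))$, the counit $\mrm{H}^0(B) \otimes^{\mrm{L}}_B SM \to M$ is an epimorphism in $\cat{D}(\mrm{H}^0(B))$. By \cref{prop:split}, any epimorphism in a triangulated category splits, so $S$ is faithful if and only if $M$ is a direct summand of $\mrm{H}^0(B) \otimes^{\mrm{L}}_B SM$ for every $M$. To prove the theorem I therefore need to exhibit a commutative noetherian DG-ring $B$ with bounded cohomology, together with a specific $M \in \cat{D}(\mrm{H}^0(B))$, such that $M$ is not a direct summand of $\mrm{H}^0(B) \otimes^{\mrm{L}}_B SM$.

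First I would choose a ring $A$ whose derived category carries enough higher Ext structure to support a nontrivial Postnikov attaching map (for instance a commutative noetherian local ring of Krull dimension $\geq 2$, or any ring whose residue field $k$ has non-vanishing $\opn{Ext}^{\ge 2}_A(k,k)$). Next I would construct a non-formal commutative noetherian DG-ring $B$ with $\mrm{H}^0(B) = A$, bounded cohomology in a few negative degrees, and a nontrivial Postnikov $k$-invariant. The crucial design constraint is that $B$ should not admit any DG-ring section $A \to B$ of $\pi_B \colon B \to A$; this rules out trivial extensions, for which the very argument of the preceding theorem shows that $S$ is already faithful.

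With $B$ chosen, I would take $M$ to be an $A$-module that is sensitive to the higher structure of $B$ (typically the residue field of a local $A$, or $A$ itself), produce an explicit K-projective resolution of $M$ over $B$, and compute the reduction $\mrm{H}^0(B) \otimes^{\mrm{L}}_B SM$ as a concrete complex of $A$-modules. Then I would read off its Postnikov tower in $\cat{D}(A)$: the top piece is $H^0 = M$, and the rest consists of the $\opn{Tor}^B_{n}(A,M)$'s in negative degrees. The desired conclusion is that the attaching map of $M$ to the lower cohomology in this tower is a non-zero class in some $\opn{Ext}^p_A(M,\opn{Tor}^B_{p-1}(A,M))$, which by standard Postnikov theory obstructs the existence of any section $M \to \mrm{H}^0(B)\otimes^{\mrm{L}}_B SM$ in $\cat{D}(A)$.

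The hard part is the construction itself: one must produce a commutative noetherian DG-ring $B$ that is bounded, genuinely non-formal, and for which the resulting attaching class can be shown to be non-zero by an explicit computation. Naive candidates such as Koszul complexes on short sequences, trivial extensions, and truncated polynomial DG-rings all turn out to be formal and admit a ring-theoretic section $A \to B$, in which case $S$ is automatically faithful. Overcoming this formality obstruction, within the commutative noetherian bounded setting, is precisely what makes the example "more difficult" than the construction in the previous theorem.
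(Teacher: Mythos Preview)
Your proposal never actually proves the theorem: it outlines a strategy (``choose a non-formal $B$, compute the Postnikov tower of $\mrm{H}^0(B)\otimes^{\mrm{L}}_B SM$, exhibit a non-vanishing attaching class'') but stops short of producing any example or any computation. Worse, the one concrete assertion you do make---that Koszul complexes ``all turn out to be formal and admit a ring-theoretic section $A\to B$, in which case $S$ is automatically faithful''---is exactly backwards. A Koszul complex $K(A;f)$ on a zero-divisor $f$ has $\mrm{H}^0 = A/fA$, and there is typically no DG-ring map $A/fA\to K(A;f)$; nor is such a Koszul complex formal in general. In fact the paper's example is precisely a Koszul complex: $B=K(A;x^2)$ over $A=\K[x,y]/(xy)$, chosen so that $B$ is Cohen--Macaulay (in the DG sense) while $\mrm{H}^0(B)=\K[x,y]/(xy,x^2)$ is not. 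So by ruling out Koszul complexes you have thrown away the very class of examples that works.

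The paper's argument is also more direct than your Postnikov-tower plan. Rather than analysing the counit $F(S(M))\to M$, it simply picks $M=\K[x]/(x^2)$ (which lies in the image of $S$ because $x^2$ is regular on $\K[x]$) and compares $\K$-dimensions: an adjunction plus tensor-evaluation computation gives $\dim_{\K}\opn{Ext}^5_B(SM,SM)=1$, while a resolution over $\mrm{H}^0(B)$ (checked in Macaulay2) gives $\dim_{\K}\opn{Ext}^5_{\mrm{H}^0(B)}(M,M)=2$. Since $S$ sends the latter Hom-set to the former, the map cannot be injective, so $S$ is not faithful. No Postnikov obstruction theory is needed---just a dimension count that forces the Hom-map to have nontrivial kernel.
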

\begin{proof}
Let $\K$ be a field, and let $A = \K[x,y]/(x\cdot y)$.
We define the DG-ring $B = K(A;x^2)$,
the Koszul complex over $A$ with respect to the element $x^2 \in A$.
Notice that since $x^2$ is not an $A$-regular element,
it holds that $\mrm{H}^{-1}(B) \ne 0$,
so that $B$ is an honest DG-ring which is not equivalent to a ring. We will show that the forgetful functor $S:\cat{D}(\mrm{H}^0(B)) \to \cat{D}(B)$ is not faithful.
Let $M = A/(y) \cong \K[x]$.
We our going to show that the forgetful map
\[
\opn{Hom}_{\cat{D}(\mrm{H}^0(B))}(\mrm{H}^0(M\otimes^{\mrm{L}}_A B),\mrm{H}^0(M\otimes^{\mrm{L}}_A B)[5]) \to 
\opn{Hom}_{\cat{D}(B)}(\mrm{H}^0(M\otimes^{\mrm{L}}_A B),\mrm{H}^0(M\otimes^{\mrm{L}}_A B)[5])
\]
is not injective.
A projective resolution of $M$ is given by the cochain complex
\[
P = \left(\dots \to A \xrightarrow{\cdot x} A \xrightarrow{\cdot y} A \xrightarrow{\cdot x} A \xrightarrow{\cdot y} A \to 0\right)
\]
which is concentrated in cohomological degrees $\le 0$.
Using this projective resolution, 
we see that
\[
\mrm{R}\opn{Hom}_A(M,M) \cong \opn{Hom}_A(P,M) \cong Q
\]
where 
\[
Q = \left(\dots \to 0 \to  M \xrightarrow{0} M \xrightarrow{\cdot x} M \xrightarrow{0} M \xrightarrow{\cdot x} M \to \dots \right)
\]
with $Q$ concentrated in cohomological degrees $\ge 0$.
To proceed with the computation, 
it is helpful to observe that the map
\[
\xymatrix{
0 \ar[r] & M \ar[r]^{0}\ar[d] & M \ar[r]^{\cdot x}\ar[d] & M \ar[r]^0\ar[d] & M \ar[r]^{\cdot x}\ar[d] & M\ar[d]\ar[r] & \dots\\
0 \ar[r] & M \ar[r] & 0 \ar[r] & M/(x\cdot M) \ar[r] & 0 \ar[r] & M/(x\cdot M) \ar[r] & \dots
}
\]
is a quasi-isomorphism.
It follows that there is an isomorphism
\begin{equation}\label{eqn:rhom}
\mrm{R}\opn{Hom}_A(M,M) \cong M \oplus \bigoplus_{n=1}^{\infty} \K[-2\cdot n]
\end{equation}
in $\cat{D}(A)$.
We now use the above to make a computation over the DG-ring $B$. 
The hom-tensor adjunction implies that
\[
\mrm{R}\opn{Hom}_B(M\otimes^{\mrm{L}}_A B,M\otimes^{\mrm{L}}_A B) \cong \mrm{R}\opn{Hom}_A(M,M\otimes^{\mrm{L}}_A B),
\]
and since $B$ is compact over $A$,
by \cite[Theorem 12.9.10]{YeBook},
the derived tensor evaluation map gives an isomorphism
\[
\mrm{R}\opn{Hom}_A(M,M\otimes^{\mrm{L}}_A B) \cong \mrm{R}\opn{Hom}_A(M,M) \otimes^{\mrm{L}}_A B.
\]
Combining this with \cref{eqn:rhom} we see that
\[
\mrm{R}\opn{Hom}_B(M\otimes^{\mrm{L}}_A B,M\otimes^{\mrm{L}}_A B) \cong \left( M \oplus \bigoplus_{n=1}^{\infty} \K[-2\cdot n]\right) \otimes^{\mrm{L}}_A B.
\]
Since derived tensor products commute with direct sums,
we can compute the latter as follows.
First, we note that
\[
M \otimes^{\mrm{L}}_A B \cong \left(\dots 0 \to M \xrightarrow{\cdot x^2} M \to 0 \to \dots \right) \cong \K[x]/(x^2).
\]
Secondly, we have that 
\[
\K \otimes^{\mrm{L}}_A B \cong \left(\dots 0 \to \K \xrightarrow{\cdot x^2} \K \to 0 \to \dots \right) \cong \K \oplus \K[1].
\]
Hence, we obtain
\[
\mrm{R}\opn{Hom}_B(M\otimes^{\mrm{L}}_A B,M\otimes^{\mrm{L}}_A B) \cong \K[x]/(x^2) \oplus \bigoplus_{n=1}^{\infty} \K[-n].
\]
This shows that for all $n \ge 1$ it holds that
\[
\opn{Hom}_{\cat{D}(B)}(M\otimes^{\mrm{L}}_A B,M\otimes^{\mrm{L}}_A B[n]) = 
\opn{Ext}^n_B(M\otimes^{\mrm{L}}_A B,M\otimes^{\mrm{L}}_A B) \cong \K.
\]
We have seen above that 
\[
M \otimes^{\mrm{L}}_A B \cong \K[x]/(x^2) \cong \mrm{H}^0(M \otimes^{\mrm{L}}_A B).
\]
Thus, the DG-module $M \otimes^{\mrm{L}}_A B$ is in the image of the forgetful functor $S:\cat{D}(\mrm{H}^0(B)) \to \cat{D}(B)$. 
This is simply because for the $A$-module $M$, 
it holds that $M$ is $x^2$-regular,
so that $M \otimes^{\mrm{L}}_A B \cong M/x^2M$.
In other words, 
it holds that
\[
\mrm{R}\opn{Hom}_B(M\otimes^{\mrm{L}}_A B,M\otimes^{\mrm{L}}_A B) = \mrm{R}\opn{Hom}_B(S(M/x^2M),S(M/x^2M)).
\]
Our next task thus is to compute
\[
\opn{Ext}^n_{\mrm{H}^0(B)}(M/x^2M,M/x^2M).
\]
By definition, we have that $\mrm{H}^0(B) = \K[x,y]/(x\cdot y,x^2)$. 
Let us denote this ring by $C$.
Computing $\opn{Ext}^n_C(M/x^2M,M/x^2M) = \opn{Ext}^n_C(C/y,C/y)$ is not difficult,
but is a tedious task. 
To avoid these computations, 
we use the computer algebra system Macaulay2 \cite{M2}.
Using it, one may verify that
\[
\opn{Ext}^5_C(C/y,C/y) \cong C^2/\opn{Im}(\psi)
\]
where $\psi:C^4 \to C^2$ is given by 
\[
\psi(a,b,c,d) = (a \cdot x + b\cdot y, c\cdot x + d\cdot y).
\]
It follows that 
\[
\dim_{\K}\left(\opn{Ext}^5_{\mrm{H}^0(B)}(M/x^2M,M/x^2M)\right) = 2.
\]
But we have seen above that
\[
\dim_{\K}\left(\opn{Ext}^5_B(S(M/x^2M),S(M/x^2M))\right) = 1,
\]
so we conclude that the map
\[
S:\opn{Hom}_{\cat{D}(\mrm{H}^0(B))}(M/x^2M,M/x^2M[5]) \to 
\opn{Hom}_{\cat{D}(B)}(S(M/x^2M),S(M/x^2M[5]))
\]
cannot be injective. Hence, the functor $S$ is not faithful.
\end{proof}

\begin{rem}
The reader might wonder how did we arrive to this example.
Our basic idea was to find a DG-ring $B$ which has a rather nice homological behavior, 
but such that $\mrm{H}^0(B)$ has bad homological behavior.
This works because rings with nice homological behavior tend to have smaller $\opn{Ext}$ modules.
The point in the above example is that the ring $A$ we started with is a Cohen-Macaulay ring,
and hence, by \cite[Corollary 4.6]{ShKoszul},
the DG-ring $B$, being a Koszul DG-ring over a Cohen-Macaulay ring, is a Cohen-Macaulay DG-ring.
On the other hand, the ring
\[
\mrm{H}^0(B) = \K[x,y]/(x\cdot y,x^2)
\]
is not Cohen-Macaulay, 
and this allowed the above bad behavior to happen.
\end{rem}

Despite the failure of the forgetful functor to be faithful,
we next show that under commutativity and boundedness assumptions, we have the following simultaneously vanishing results which the forgetful functor can detect:

\begin{thm}\label{thm:sym-vanish}
Let $A$ be a commutative DG-ring,
and suppose that $A$ has bounded cohomology and the commutative ring $\mrm{H}^0(A)$ is noetherian.
Let $S:\cat{D}(\mrm{H}^0(A)) \to \cat{D}(A)$ be the forgetful functor,
and let $M,N \in \cat{D}(\mrm{H}^0(A))$.
Then the following holds:
\begin{enumerate}
\item $S(M)\otimes^{\mrm{L}}_A S(N) \cong 0$ if and only if  $M\otimes^{\mrm{L}}_{\mrm{H}^0(A)} N \cong 0$.
\item $\mrm{R}\opn{Hom}_A(S(M),S(N)) \cong 0$ if and only if $\mrm{R}\opn{Hom}_{\mrm{H}^0(A)}(M,N) \cong 0$
\end{enumerate}
\end{thm}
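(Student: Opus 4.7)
The plan is to reduce both parts to elementary assertions inside $\cat{D}(\mrm{H}^0(A))$, by factoring the tensor and $\mrm{R}\opn{Hom}$ computations through the derived self-tensor
\[
T := \mrm{H}^0(A)\otimes^{\mrm{L}}_A \mrm{H}^0(A)\in\cat{D}(\mrm{H}^0(A)).
\]
The first step is to establish that $\mrm{H}^0(A)$ is a split retract of $T$ in $\cat{D}(\mrm{H}^0(A))$. Since $\mrm{H}^0(A)$ is an $A$-algebra via $\pi_A$, the derived multiplication $\mu:T\to \mrm{H}^0(A)$ and the unit $\eta:\mrm{H}^0(A)=A\otimes^{\mrm{L}}_A\mrm{H}^0(A)\to T$ induced by $\pi_A\otimes\mrm{id}$ satisfy $\mu\circ\eta=\mrm{id}_{\mrm{H}^0(A)}$, yielding a decomposition $T\cong\mrm{H}^0(A)\oplus T'$ for some $T'\in\cat{D}(\mrm{H}^0(A))$.

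Next I would derive the two base-change identities
\begin{align*}
S(M)\otimes^{\mrm{L}}_A S(N) &\cong S\bigl(T\otimes^{\mrm{L}}_{\mrm{H}^0(A)}M\otimes^{\mrm{L}}_{\mrm{H}^0(A)}N\bigr),\\
\mrm{R}\opn{Hom}_A(S(M),S(N)) &\cong S\bigl(\mrm{R}\opn{Hom}_{\mrm{H}^0(A)}(T\otimes^{\mrm{L}}_{\mrm{H}^0(A)}M,N)\bigr),
\end{align*}
in $\cat{D}(A)$. The first follows by writing $S(N)\cong \mrm{H}^0(A)\otimes^{\mrm{L}}_{\mrm{H}^0(A)}N$, applying associativity, and using $\mrm{H}^0(A)\otimes^{\mrm{L}}_A S(M)=T\otimes^{\mrm{L}}_{\mrm{H}^0(A)}M$; the second is the derived hom-tensor adjunction coming from $F\dashv S$. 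Since the forgetful functor $S$ is always conservative, each vanishing in $\cat{D}(A)$ is equivalent to the vanishing of the inner object in $\cat{D}(\mrm{H}^0(A))$.

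With the splitting $T\cong\mrm{H}^0(A)\oplus T'$ in hand, both parts follow. For (1) it yields
\[
T\otimes^{\mrm{L}}_{\mrm{H}^0(A)}M\otimes^{\mrm{L}}_{\mrm{H}^0(A)}N\cong \bigl(M\otimes^{\mrm{L}}_{\mrm{H}^0(A)}N\bigr)\oplus \bigl(T'\otimes^{\mrm{L}}_{\mrm{H}^0(A)}M\otimes^{\mrm{L}}_{\mrm{H}^0(A)}N\bigr),
\]
so the $(\Rightarrow)$ direction is immediate because the first summand is a retract of a zero object, while the $(\Leftarrow)$ direction is trivial since tensoring with $T$ preserves zero. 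For (2) the analogous decomposition combined with the tensor-hom adjunction
\[
\mrm{R}\opn{Hom}_{\mrm{H}^0(A)}(T'\otimes^{\mrm{L}}_{\mrm{H}^0(A)}M,N)\cong \mrm{R}\opn{Hom}_{\mrm{H}^0(A)}(T',\mrm{R}\opn{Hom}_{\mrm{H}^0(A)}(M,N))
\]
handles both directions symmetrically: if $\mrm{R}\opn{Hom}_{\mrm{H}^0(A)}(M,N)\cong 0$ then the right-hand side is $\mrm{R}\opn{Hom}(T',0)=0$, while conversely $\mrm{R}\opn{Hom}(M,N)$ is a direct summand of the Hom complex attached to $T\otimes^{\mrm{L}}_{\mrm{H}^0(A)}M$.

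I expect the principal technical point to be the careful verification of the split retract $T\cong\mrm{H}^0(A)\oplus T'$ and the derived base-change/adjunction isomorphisms in the DG setting; once these are in place the remainder of the argument is formal. This particular route does not visibly require the bounded cohomology or noetherian hypotheses, which suggests that the author may take a different path, perhaps invoking the conservativity of the reduction functor $F$ (which genuinely requires bounded cohomology) to treat (1) and a dual coreduction argument for (2), giving a more uniform and conceptual presentation of the two parts.
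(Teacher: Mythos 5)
Your reduction formulas are fine: the identifications $S(M)\otimes^{\mathrm{L}}_A S(N)\cong S\bigl(M\otimes^{\mathrm{L}}_{\mathrm{H}^0(A)}(\mathrm{H}^0(A)\otimes^{\mathrm{L}}_A S(N))\bigr)$ and $\mathrm{R}\operatorname{Hom}_A(S(M),S(N))\cong \mathrm{R}\operatorname{Hom}_{\mathrm{H}^0(A)}(\mathrm{H}^0(A)\otimes^{\mathrm{L}}_A S(M),N)$ are standard, and so is conservativity of $S$. The gap is in the splitting step, and it is fatal to the whole strategy. The object $T=\mathrm{H}^0(A)\otimes^{\mathrm{L}}_A\mathrm{H}^0(A)$ carries two $\mathrm{H}^0(A)$-module structures, one from each tensor factor. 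Your maps $\eta$ and $\mu$ do compose to the identity, but $\eta$ (induced by $\pi_A$ on one factor) is linear only for the \emph{other} factor's action; so you obtain $\mathrm{H}^0(A)$ as a retract of $T$ in $\cat{D}(\mathrm{H}^0(A))$ for one of the two structures only. In the expressions where you use the splitting, namely $T\otimes^{\mathrm{L}}_{\mathrm{H}^0(A)}M\otimes^{\mathrm{L}}_{\mathrm{H}^0(A)}N$ and $\mathrm{R}\operatorname{Hom}_{\mathrm{H}^0(A)}(T\otimes^{\mathrm{L}}_{\mathrm{H}^0(A)}M,N)$, both actions are in play simultaneously ($M$ is tensored along one factor, while the ambient $\cat{D}(\mathrm{H}^0(A))$-structure used for the outer tensor/Hom is the other factor), so you would need $\mathrm{H}^0(A)$ to be a retract of $T$ as a derived \emph{bimodule}. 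That stronger statement is false in general, and no choice of section can repair it.

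Indeed, a bimodule splitting would imply that every $M\in\cat{D}(\mathrm{H}^0(A))$ is a direct summand of $\mathrm{H}^0(A)\otimes^{\mathrm{L}}_A S(M)$ in $\cat{D}(\mathrm{H}^0(A))$, hence (by adjunction) that $\mathrm{R}\operatorname{Hom}_{\mathrm{H}^0(A)}(M,N)$ is always a direct summand of $\mathrm{R}\operatorname{Hom}_A(S(M),S(N))$ --- this is exactly what your argument for the "only if" direction of (2) uses. But the example given in this paper's proof that $S$ need not be faithful, namely $B=K(A;x^2)$ over $A=\K[x,y]/(xy)$ with $M=N=\K[x]/(x^2)$, satisfies all the hypotheses of the present theorem (commutative, noetherian, bounded cohomology) and has $\dim_{\K}\operatorname{Ext}^5_{\mathrm{H}^0(B)}(M,N)=2$ while $\dim_{\K}\operatorname{Ext}^5_B(S(M),S(N))=1$, so no such summand inclusion can exist, not even at the level of graded vector spaces. (Relatedly, a natural splitting of the counit for all $M$ would force $S$ to be faithful, which fails.) So the summand strategy cannot prove the "only if" directions even with the stated hypotheses; your observation that your route needs neither boundedness nor noetherianity was the warning sign. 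The paper's proof is of a completely different nature: it invokes the Benson--Iyengar--Krause support and cosupport functors, the stratification and costratification of $\cat{D}(A)$ by $\mathrm{H}^0(A)$ (which is where bounded cohomology and noetherianity enter), Hopkins--Neeman and Neeman for $\cat{D}(\mathrm{H}^0(A))$, the tensor/Hom formulas for support and cosupport, and the fact that $S$ preserves support and cosupport.
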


We require some preliminaries before proving \cref{thm:sym-vanish}.
If $A$ is a commutative non-positive DG-ring,
then $\cat{D}(A)$ is equipped with a natural action of the ring $\mrm{H}^0(A)$. This allows one to apply the Benson-Iyengar-Krause framework of support and cosupport \cite{BIK,BIK2}. 
In particular, associated to any prime ideal $\bar{\p} \in \opn{Spec}(\mrm{H}^0(A))$ are functors $\Gamma_{\bar{\p}}$,
$L_{\bar{\p}}$, $\Lambda_{\bar{\p}}$ and  $V_{\bar{\p}}$,
called the local cohomology, localization, completion and colocalization functors at $\bar{\p}$. 
When $A$ is an ordinary commutative noetherian ring,
these coincide with the usual local cohomology, localization, derived completion and colocalization functors.
These are all triangulated functors $\cat{D}(A) \to \cat{D}(A)$.
In this differential graded setting,
they were also studied in detail in \cite{ShComp,ShWi}.
Using these functors, one defines the (big) support and the cosupport of a DG-module $M$ by the formulas
\[
\mrm{supp}_A(M) = \{\bar{\p} \in \opn{Spec}(\mrm{H}^0(A)) \mid \Gamma_{\bar{\p}} L_{\bar{\p}}(M) \ncong 0\}
\]
and
\[
\mrm{cosupp}_A(M) = \{\bar{\p} \in \opn{Spec}(\mrm{H}^0(A)) \mid \Lambda_{\bar{\p}} V_{\bar{\p}}(M) \ncong 0\}.
\]

\begin{lem}\label{lem:support}
Let $A$ be a commutative DG-ring,
with a forgetful functor $S:\cat{D}(\mrm{H}^0(A)) \to \cat{D}(A)$.
Then for any $M \in \cat{D}(\mrm{H}^0(A))$ there are equalities
\[
\mrm{supp}_{\mrm{H}^0(A)}(M) = \mrm{supp}_A(S(M))
\]
and 
\[
\mrm{cosupp}_{\mrm{H}^0(A)}(M) = \mrm{cosupp}_A(S(M)).
\]
\end{lem}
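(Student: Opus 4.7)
The plan is to reduce both equalities to the statement that the forgetful functor $S$ commutes with each of the four Benson--Iyengar--Krause functors $\Gamma_{\bar{\p}}$, $L_{\bar{\p}}$, $\Lambda_{\bar{\p}}$, $V_{\bar{\p}}$, and then conclude using the fact, recorded in item (1) of Theorem B, that $S$ is conservative. The key structural observation is that the action of $\mrm{H}^0(A)$ on $\cat{D}(A)$ used to define the BIK functors is the central action coming from $\mrm{H}^0(A) = \opn{End}_{\cat{D}(A)}(A)$, which factors through the structure map $A \to \mrm{H}^0(A)$; hence for any $N \in \cat{D}(\mrm{H}^0(A))$, the $\mrm{H}^0(A)$-action on $S(N)$ induced by this central action coincides with the original $\mrm{H}^0(A)$-action on $N$. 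In other words, $S$ is $\mrm{H}^0(A)$-linear as a triangulated functor between $\mrm{H}^0(A)$-linear triangulated categories.

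The next step is to deduce from $\mrm{H}^0(A)$-linearity of $S$ that for every $M \in \cat{D}(\mrm{H}^0(A))$ and every $\bar{\p} \in \opn{Spec}(\mrm{H}^0(A))$ there are natural isomorphisms in $\cat{D}(A)$:
\[
S(\Gamma_{\bar{\p}}(M)) \simeq \Gamma_{\bar{\p}}(S(M)), \quad S(L_{\bar{\p}}(M)) \simeq L_{\bar{\p}}(S(M)),
\]
and similarly for $\Lambda_{\bar{\p}}$ and $V_{\bar{\p}}$. This is because each of these functors is built explicitly from the central $\mrm{H}^0(A)$-action: the localization $L_{\bar{\p}}$ is obtained via the telescope on the multiplicative system $\mrm{H}^0(A) \setminus \bar{\p}$, the local cohomology $\Gamma_{\bar{\p}}$ is obtained by combining this with the stable Koszul complex on generators of $\bar{\p}$, and $\Lambda_{\bar{\p}}$ and $V_{\bar{\p}}$ are defined dually. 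Since both constructions on $\cat{D}(A)$ and on $\cat{D}(\mrm{H}^0(A))$ use the same elements of the same ring $\mrm{H}^0(A)$, and $S$ preserves the action, the isomorphisms follow. Alternatively, these commutations are available directly from the treatment of the BIK functors on $\cat{D}(A)$ in \cite{ShComp,ShWi}.

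Once these commutations are in hand, the conclusion is immediate. By item (1) of Theorem B, an object $X \in \cat{D}(\mrm{H}^0(A))$ vanishes if and only if $S(X) \cong 0$ in $\cat{D}(A)$. Applying this to $X = \Gamma_{\bar{\p}} L_{\bar{\p}}(M)$ and using the commutation, we get
\[
\Gamma_{\bar{\p}} L_{\bar{\p}}(M) \cong 0 \iff S(\Gamma_{\bar{\p}} L_{\bar{\p}}(M)) \cong 0 \iff \Gamma_{\bar{\p}} L_{\bar{\p}}(S(M)) \cong 0,
\]
proving the first equality. The cosupport equality follows in exactly the same way, using $\Lambda_{\bar{\p}} V_{\bar{\p}}$ in place of $\Gamma_{\bar{\p}} L_{\bar{\p}}$.

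The main obstacle is verifying the commutation relations in the second paragraph with enough care. If the paper has already set up the BIK machinery on $\cat{D}(A)$ so that its construction is manifestly $\mrm{H}^0(A)$-linear and natural in $\mrm{H}^0(A)$-linear triangulated functors, then the commutations are essentially formal. Otherwise, one would need to trace through the definitions from \cite{BIK,BIK2,ShComp,ShWi} and check compatibility with restriction of scalars along $A \to \mrm{H}^0(A)$ by hand, using that $S$ preserves homotopy colimits and homotopy limits in the relevant range.
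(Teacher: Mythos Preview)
Your proposal is correct and follows essentially the same route as the paper: the paper simply cites \cite[Theorem~7.7]{BIK2} for the commutation of $S$ with $\Gamma_{\bar{\p}}$, $L_{\bar{\p}}$, $\Lambda_{\bar{\p}}$, $V_{\bar{\p}}$, and then invokes conservativity of $S$ to conclude. Your expanded justification of the commutation via $\mrm{H}^0(A)$-linearity of $S$ is exactly the mechanism behind that cited result; the only slip is that the conservativity of the forgetful functor is item~(1) of Theorem~C, not Theorem~B.
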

\begin{proof}
According to \cite[Theorem 7.7]{BIK2},
the functor $S$ commutes with $\Gamma_{\bar{\p}}$,
$L_{\bar{\p}}$, $\Lambda_{\bar{\p}}$ and  $V_{\bar{\p}}$.
The result then follows from the fact that $S$ is conservative.
\end{proof}

We now use the above to prove \cref{thm:sym-vanish}.
\begin{proof}[Proof of Theorem \ref{thm:sym-vanish}]
The assumptions on $A$ imply by \cite[Corollary 4.12]{ShWi} that $\cat{D}(A)$ is stratified and costratified by the canonical action of $\mrm{H}^0(A)$.
Similarly, the Hopkins-Neeman theorem \cite{Hopkins,Neeman1992} about stratification and the Neeman theorem \cite{Neeman11} about costratification,
imply that $\cat{D}(\mrm{H}^0(A))$ is also stratified and costratified by its canonical $\mrm{H}^0(A)$ action.
Given $M,N \in \cat{D}(\mrm{H}^0(A))$,
it follows that $S(M)\otimes^{\mrm{L}}_A S(N) \cong 0$ if and only if $\mrm{supp}_A(S(M)\otimes^{\mrm{L}}_A S(N)) = \emptyset$,
and that $\mrm{R}\opn{Hom}_A(S(M),S(N)) \cong 0$
if and only if $\mrm{cosupp}_A(\mrm{R}\opn{Hom}_A(S(M),S(N))) = \emptyset$.
Similarly, we have that 
$M\otimes^{\mrm{L}}_{\mrm{H}^0(A)} N \cong 0$ 
if and only if $\mrm{supp}_{\mrm{H}^0(A)}(M\otimes^{\mrm{L}}_{\mrm{H}^0(A)} N) = \emptyset$, and that
$\mrm{R}\opn{Hom}_{\mrm{H}^0(A)}(M,N) \cong 0$
if and only if 
 $\mrm{cosupp}_{\mrm{H}^0(A)}(\mrm{R}\opn{Hom}_{\mrm{H}^0(A)}(M,N)) = \emptyset$.
According to \cite[Theorem 4.15(3)]{ShWi}, 
there is an equality
\[
\mrm{supp}_A(S(M)\otimes^{\mrm{L}}_A S(N)) = \mrm{supp}_A(S(M)) \cap \mrm{supp}_A(S(N)),
\]
while by \cite[Theorem 4.17(4)]{ShWi},
we have that
\[
\mrm{cosupp}_A(\mrm{R}\opn{Hom}_A(S(M),S(N))) = \mrm{supp}_A(S(M)) \cap \mrm{cosupp}_A(S(N)).
\]
But as $\cat{D}(\mrm{H}^0(A))$ is also stratified and costratified by the canonical action of $\mrm{H}^0(A)$,
we also have equalities
\[
\mrm{supp}_{\mrm{H}^0(A)}(M\otimes^{\mrm{L}}_{\mrm{H}^0(A)} N) = \mrm{supp}_{\mrm{H}^0(A)}(M) \cap \mrm{supp}_{\mrm{H}^0(A)}(N)
\]
and
\[
\mrm{cosupp}_{\mrm{H}^0(A)}(\mrm{R}\opn{Hom}_{\mrm{H}^0(A)}(M,N)) = \mrm{supp}_{\mrm{H}^0(A)}(M) \cap \mrm{cosupp}_{\mrm{H}^0(A)}(N).
\]
Combining these facts with \cref{lem:support},
we deduce that there are equalities
\[
\mrm{supp}_A(S(M)\otimes^{\mrm{L}}_A S(N)) = 
\mrm{supp}_{\mrm{H}^0(A)}(M\otimes^{\mrm{L}}_{\mrm{H}^0(A)} N)
\]
and
\[
\mrm{cosupp}_A(\mrm{R}\opn{Hom}_A(S(M),S(N))) = \mrm{cosupp}_{\mrm{H}^0(A)}(\mrm{R}\opn{Hom}_{\mrm{H}^0(A)}(M,N)).
\]
This implies the result.
\end{proof}

In the next corollary, note that \textbf{we do not} assume that $\mathbf{x}$ is an $A$-regular sequence.

\begin{cor}\label{cor:torExt}
Let $A$ be a commutative noetherian ring,
let $\mathbf{x}=x_1,\dots,x_n$ be a finite sequence of elements in $A$, and let $M,N$ be $A$-modules.
Assume that $\mathbf{x}$ is both an $M$-regular and an $N$-regular sequence.
\begin{enumerate}
\item If $\opn{Tor}_n^A(M,N) = 0$ for all $n\ge 0$,
then $\opn{Tor}_n^{A/\mathbf{x}A}(M/\mathbf{x}M,N/\mathbf{x}N) = 0$ for all $n \ge 0$.
\item If $\opn{Ext}^n_A(M,N) = 0$ for all $n\ge 0$,
then $\opn{Ext}^n_{A/\mathbf{x}A}(M/\mathbf{x}M,N/\mathbf{x}N) = 0$ for all $n \ge 0$.
\end{enumerate}
\end{cor}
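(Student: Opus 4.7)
The plan is to interpolate between the ring $A$ and its quotient $A/\mathbf{x}A$ using a DG-ring, namely the Koszul complex $B := K(A;\mathbf{x})$. This $B$ is a commutative non-positive DG-ring with bounded cohomology, and $\mrm{H}^0(B) = A/\mathbf{x}A$ is noetherian by assumption, so $B$ satisfies the hypotheses of \cref{thm:sym-vanish}. The crucial input is that because $\mathbf{x}$ is both $M$-regular and $N$-regular, the derived tensor products $M \otimes^{\mrm{L}}_A B$ and $N \otimes^{\mrm{L}}_A B$ have cohomology concentrated in degree zero, equal to $M/\mathbf{x}M$ and $N/\mathbf{x}N$ respectively. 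Hence in $\cat{D}(B)$ they are isomorphic to $S(M/\mathbf{x}M)$ and $S(N/\mathbf{x}N)$, where $S:\cat{D}(\mrm{H}^0(B)) \to \cat{D}(B)$ is the forgetful functor. This places us in exactly the setting required to apply \cref{thm:sym-vanish}.

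For part (1), the assumption $\opn{Tor}_n^A(M,N) = 0$ for all $n \ge 0$ is equivalent to $M \otimes^{\mrm{L}}_A N \cong 0$ in $\cat{D}(A)$. Applying $- \otimes^{\mrm{L}}_A B$ and using the standard base-change identity
\[
(M \otimes^{\mrm{L}}_A N) \otimes^{\mrm{L}}_A B \;\cong\; (M \otimes^{\mrm{L}}_A B) \otimes^{\mrm{L}}_B (N \otimes^{\mrm{L}}_A B),
\]
we deduce that $S(M/\mathbf{x}M) \otimes^{\mrm{L}}_B S(N/\mathbf{x}N) \cong 0$. By \cref{thm:sym-vanish}(1), this forces $(M/\mathbf{x}M) \otimes^{\mrm{L}}_{A/\mathbf{x}A} (N/\mathbf{x}N) \cong 0$, which is precisely the vanishing of the Tor modules over $A/\mathbf{x}A$.

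For part (2), I first promote the hypothesis $\mrm{R}\opn{Hom}_A(M,N) \cong 0$ to the vanishing of $\mrm{R}\opn{Hom}_A(M, N \otimes^{\mrm{L}}_A B)$. This is immediate since $B$ is a bounded complex of finite free $A$-modules, so $N \otimes^{\mrm{L}}_A B = N \otimes_A B$ lies in the thick triangulated subcategory of $\cat{D}(A)$ generated by $N$, and $\mrm{R}\opn{Hom}_A(M,-)$ is triangulated. The restriction-extension adjunction then gives
\[
\mrm{R}\opn{Hom}_B(S(M/\mathbf{x}M), S(N/\mathbf{x}N)) \;\cong\; \mrm{R}\opn{Hom}_A(M, N \otimes^{\mrm{L}}_A B) \;\cong\; 0,
\]
and \cref{thm:sym-vanish}(2) yields $\mrm{R}\opn{Hom}_{A/\mathbf{x}A}(M/\mathbf{x}M, N/\mathbf{x}N) \cong 0$, as required. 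No serious obstacle arises: the only nontrivial content has been absorbed into \cref{thm:sym-vanish}, and the argument above is simply its natural packaging via Koszul descent.
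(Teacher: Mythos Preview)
Your proof is correct and follows essentially the same route as the paper: introduce the Koszul DG-ring $B=K(A;\mathbf{x})$, use regularity of $\mathbf{x}$ on $M$ and $N$ to identify $M\otimes^{\mrm{L}}_A B$ and $N\otimes^{\mrm{L}}_A B$ with $S(M/\mathbf{x}M)$ and $S(N/\mathbf{x}N)$, and then invoke \cref{thm:sym-vanish}. The only cosmetic difference is in the $\opn{Ext}$ step: the paper packages the vanishing via the derived tensor-evaluation isomorphism $\mrm{R}\opn{Hom}_B(M\otimes^{\mrm{L}}_A B,N\otimes^{\mrm{L}}_A B)\cong \mrm{R}\opn{Hom}_A(M,N)\otimes^{\mrm{L}}_A B$ (using compactness of $B$ over $A$), whereas you use adjunction together with the observation that $N\otimes^{\mrm{L}}_A B$ lies in the thick subcategory generated by $N$; these are equivalent formulations of the same finiteness fact.
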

\begin{proof}
Let $B = K(A;\mathbf{x})$ be the Koszul complex of $A$ with respect to $\mathbf{x}$, considered as a DG-ring. 
Note that $B$ is a commutative noetherian DG-ring with bounded cohomology such that $\mrm{H}^0(B) = A/\mathbf{x}A$.
The assumptions that $\opn{Tor}_n^A(M,N) = 0$ for all $n\ge 0$, or that $\opn{Ext}^n_A(M,N) = 0$ for all $n\ge 0$ imply that $M\otimes^{\mrm{L}}_A N \cong 0$,
or that $\mrm{R}\opn{Hom}_A(M,N) \cong 0$.
In the former case, by associativity of the derived tensor product, this implies that
\[
(M\otimes^{\mrm{L}}_A B) \otimes^{\mrm{L}}_B (N\otimes^{\mrm{L}}_A B) \cong (M\otimes^{\mrm{L}}_A N) \otimes^{\mrm{L}}_A B \cong 0,
\]
while in the latter case, 
the fact that $B$ is compact over $A$ implies that
\[
\mrm{R}\opn{Hom}_B(M\otimes^{\mrm{L}}_A B, N\otimes^{\mrm{L}}_A B) \cong 
\mrm{R}\opn{Hom}_A(M,N) \otimes^{\mrm{L}}_A B \cong 0.
\]
The assumption that $\mathbf{x}$ is $M$-regular and $N$-regular implies that
\[
M\otimes^{\mrm{L}}_A B \cong \mrm{H}^0(M\otimes^{\mrm{L}}_A B) = M/\mathbf{x}M
\]
and
\[
N\otimes^{\mrm{L}}_A B \cong \mrm{H}^0(N\otimes^{\mrm{L}}_A B) = N/\mathbf{x}N.
\]
Thus, if $S:\cat{D}(\mrm{H}^0(B)) \to \cat{D}(B)$ is the forgetful functor, 
applying \cref{thm:sym-vanish}, 
we see that
\[
(M\otimes^{\mrm{L}}_A B) \otimes^{\mrm{L}}_B (N\otimes^{\mrm{L}}_A B) =
S(M/\mathbf{x}M) \otimes^{\mrm{L}}_B S(M/\mathbf{x}M) \cong 0
\]
if and only if 
\[
(M/\mathbf{x}M) \otimes^{\mrm{L}}_{\mrm{H}^0(B)} (N/\mathbf{x}N) \cong 0.
\]
Similarly, we deduce that
\[
\mrm{R}\opn{Hom}_B(M\otimes^{\mrm{L}}_A B, N\otimes^{\mrm{L}}_A B) = \mrm{R}\opn{Hom}_B(S(M/\mathbf{x}M),S(N/\mathbf{x}N)) \cong 0
\]
if and only if 
\[
\mrm{R}\opn{Hom}_{\mrm{H}^0(B)}(M/\mathbf{x}M,N/\mathbf{x}N) \cong 0.
\]
The result follows from the observation that
$\opn{Tor}_n^{A/\mathbf{x}A}(M/\mathbf{x}M,N/\mathbf{x}N) = 0$ for all $n \ge 0$ is equivalent to 
\[
(M/\mathbf{x}M) \otimes^{\mrm{L}}_{\mrm{H}^0(B)} (N/\mathbf{x}N) \cong 0,
\]
and that $\opn{Ext}^n_{A/\mathbf{x}A}(M/\mathbf{x}M,N/\mathbf{x}N) = 0$ for all $n \ge 0$ is equivalent to 
\[
\mrm{R}\opn{Hom}_{\mrm{H}^0(B)}(M/\mathbf{x}M,N/\mathbf{x}N) \cong 0.
\]
\end{proof}

\begin{rem}
In the terminology of \cite{KS},
the above corollary says that if $M,N$ are a pair of Ext-orthogonal modules over a commutative noetherian ring $A$, 
and if $\mathbf{x}$ is both $M$-regular and $N$-regular,
then the pair $M/\mathbf{x}M$ and $N/\mathbf{x}N$ is Ext-orthogonal over $A/\mathbf{x}$.
\end{rem}

\textbf{Acknowledgments.}

The author is thankful to an anonymous referee for suggestions that helped improving this manuscript. This work has been supported by the grant GA~\v{C}R 20-02760Y from the Czech Science Foundation.

\bibliographystyle{abbrv}
\bibliography{main}

\end{document}